\theoremstyle{plain}
\newcounter{thm}
\newtheorem{theorem}[thm]{Theorem}
\newtheorem{remark}[thm]{Remark}
\newtheorem{corollary}[thm]{Corollary}
\newtheorem{proposition}[thm]{Proposition}
\newtheorem{lemma}[thm]{Lemma}
\theoremstyle{definition}
\newtheorem{definition}[thm]{Definition}
\newcommand{\mathbbm}[1]{\boldsymbol{#1}}
\title[VRJP recurrence and fractional-moment decay]{VRJP recurrence and fractional-moment decay for the \(H^{2|2}\) model's effective field on the hierarchical lattice}
\author{Jinglin Wang}
\address{Institut de recherche mathématique avancée, Universit\'e de Strasbourg}
\email{jinglin.wang@unistra.fr}
\author{Xiaolin Zeng}
\address{Institut de recherche mathématique avancée, Universit\'e de Strasbourg}
\email{zeng@math.unistra.fr}
\date{}
\subjclass[2020]{60K35, 60K37, 82B44}
\keywords{Hierarchical model, VRJP, $H^{2|2}$-model, Phase transition}
\begin{document}

\begin{abstract}
We prove recurrence of the vertex-reinforced jump process on the hierarchical lattice for spectral dimension \(d<2\) for every value of the conductance parameter \(\overline{W}\), and at the critical spectral dimension \(d=2\) for sufficiently strong reinforcement, i.e., sufficiently small \(\overline{W}\). The key estimate is a fractional-moment bound for the Green's function of the associated random Schrödinger operator, expressed as geometric decay across hierarchical scales for the effective \(H^{2|2}\) field. The proof combines the fractional-moment method with an exact hierarchical coarse-graining identity, which turns the path expansion into a recursion over block scales and controls the combinatorial growth created by long-range edges. Together with existing long-range-order results in the regime \(d>2\), these estimates identify the recurrent side of the hierarchical VRJP phase diagram, leaving only the weak-reinforcement critical regime open.
\end{abstract}

\maketitle
\pagestyle{plain}

\section{Introduction}
\label{sec:org669a6eb}
The hierarchical lattice can be realized as a weighted infinite complete graph on the set \(\mathbb{X}=\{1,2,\ldots\}\), on which vertices are grouped into dyadic blocks \(B_{k,m}:=\{2^{k}(m-1)+1 ,\ldots,2^{k}m\}\), called the \(m\)-th dyadic block of order \(k\). Edge weights \(W_{i,j}\) between vertices \(i,j\in \mathbb{X}\) depend only on the hierarchical scale \(d_{\mathbb{X}}(i,j)\), the order of the smallest dyadic block containing both \(i\) and \(j\), and real parameters \(\rho >1\), \(\overline{W}>0\); through the formula
\[W_{i,j}=\overline{W} (2\rho)^{-d_{\mathbb{X}}(i,j)}.\]

In this paper, we study the vertex-reinforced jump process (VRJP) on the hierarchical lattice of spectral dimension defined by \(d=2\frac{\log 2}{\log \rho}\), together with the associated effective \(H^{2|2}\) field, by which we mean the horospherical \(u\)-field arising in the supersymmetric representation of the model (detailed definitions in Section~\ref{h:35597662-da75-4406-b6e2-7bfa14c99d4f}). We prove that the VRJP is recurrent for \(d<2\) and transient for \(d>2\), and that at the critical dimension \(d=2\) it is recurrent in a strong-reinforcement regime, corresponding here to sufficiently small \(\overline{W}\). On the field side, we establish geometric decay across hierarchical scales for fractional moments of the Green's function in the same recurrent regime. Combined with the long-range-order results of \cite{Disertori2022,disertori23:_trans,disertori24:_fluct} for the hierarchical \(H^{2|2}\) model, this supports the interpretation of \(d=2\) as the critical spectral dimension on the hierarchical lattice.

The relevant separation parameter on the hierarchical lattice is not an ordinary graph distance: since the graph is complete, the graph distance between distinct vertices is always one. If one prefers a genuine ultrametric distance, one may use \(r_{\mathbb{X}}(i,j):=2^{d_{\mathbb{X}}(i,j)}\). Our estimates are geometric in the scale variable \(d_{\mathbb{X}}\); equivalently, they can be rewritten as power-law bounds in the ultrametric distance \(r_{\mathbb{X}}\). For this reason, the decay established here should be viewed as a hierarchical-scale statement rather than as Euclidean exponential decay.

It is also useful to keep in mind a binary-tree / \(2\)-adic picture for the geometry. After a suitable relabeling of the vertices by tree coordinates, the hierarchical lattice may be viewed as a dense countable subset of the boundary of the infinite binary tree, equivalently of \(\mathbb Z_{2}\) (2-adics): vertices of \(\mathbb X\) correspond to eventually zero binary rays, whereas general points of \(\mathbb Z_{2}\) correspond to arbitrary infinite rays. In this language, \(d_{\mathbb X}(i,j)\) records the scale of the smallest dyadic block containing \(i\) and \(j\), that is, the first level at which the two rays merge, and \(r_{\mathbb X}(i,j)=2^{d_{\mathbb X}(i,j)}\) is the associated ultrametric distance.

The main results can be summarized informally as follows.

\medskip
\noindent{\bf Theorem (Informal).} For the standard hierarchical weights \(W_{i,j}\), the VRJP is recurrent for \(d<2\) and transient for \(d>2\). At the critical dimension \(d=2\), we prove recurrence in a strong-reinforcement regime. In the same recurrent regimes, the associated effective \(H^{2|2}\) field exhibits geometric decay across hierarchical scales: for every \(0< s< \frac{1}{2}\), the fractional moments of the normalized Green's function satisfy
\[
\mathbb{E}\bigl(e^{s u_i^{[j]}}\bigr)\le C\,\theta^{-s d_{\mathbb{X}}(i,j)}
\]
for suitable constants \(C<\infty\) and \(\theta>1\). In the subcritical case \(d<2\), one may take \(\theta=2\rho\), and this rate is optimal up to the multiplicative constant. More precise statements, including the exact critical bound and the modified critical weights discussed below, are given in Theorems~\ref{thm-vrjp-phase-trans}--\ref{thm-FMM-exp-uij}.
\medskip

Figure \ref{fig-simu} gives a numerical illustration of the change of behavior across spectral dimensions.
\begin{figure}[htbp]
\centering
\includegraphics[scale=0.27]{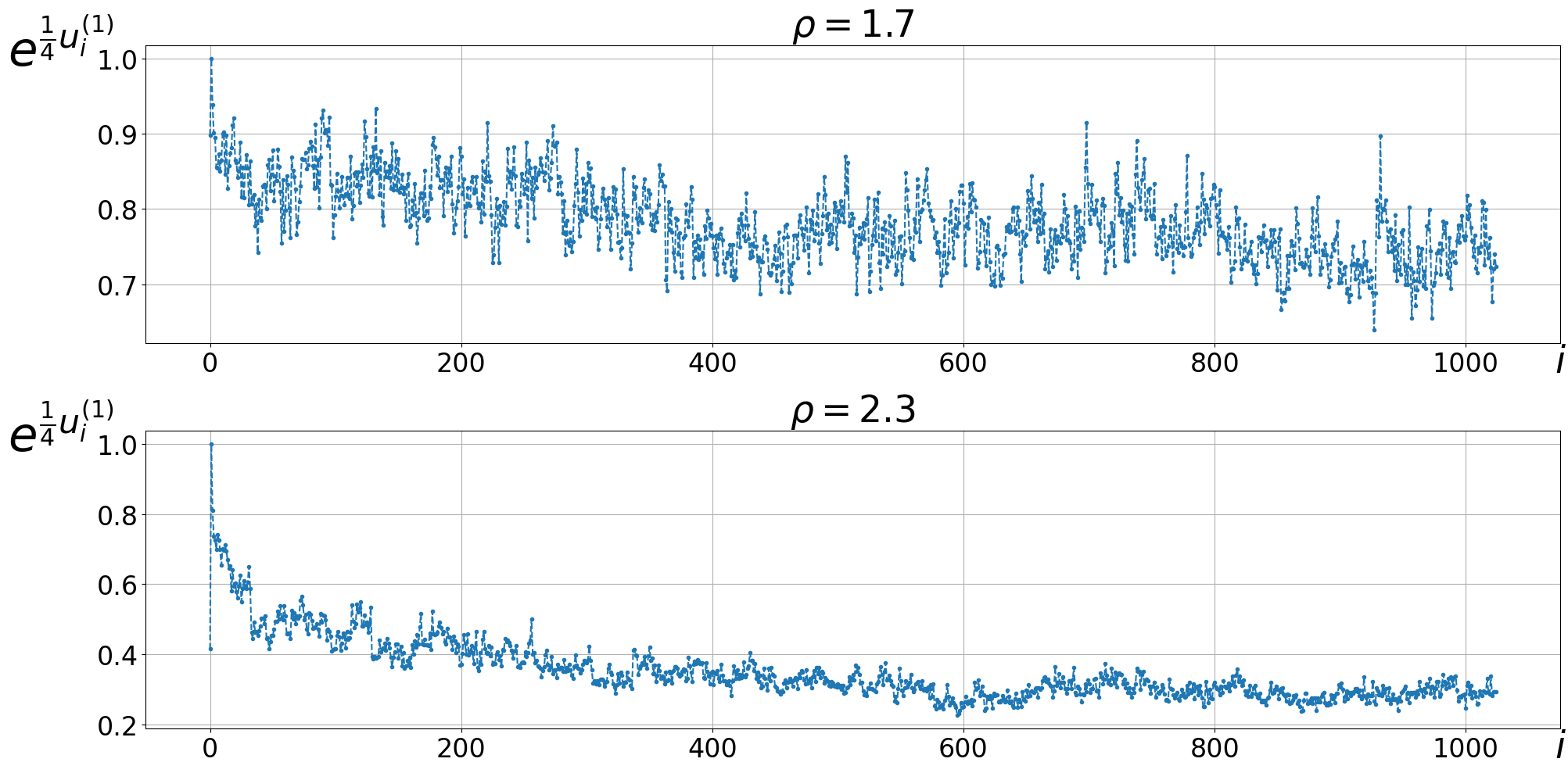}
\caption{\label{fig-simu}Empirical \(1/4\)-moment of \(e^{u^{(1)}_{i}}\) for \(i\in \Lambda_{10}\), based on \(50\) samples on a graph with 1024 vertices, with parameter \(W=1\) and several values of \(\rho\), corresponding to spectral dimensions \(d=2 \frac{\log 2}{\log \rho}\). This plot is illustrative and not intended to be quantitative.}
\end{figure}

The VRJP is a self-interacting jump process whose jump rate from \(i\) to \(j\) is proportional to the conductance \(W_{i,j}\) times the local time already accumulated at \(j\). Strong reinforcement favors repeated visits to previously visited sites, while weak reinforcement makes the process closer to the underlying Markov jump process with jump rate equals \(W_{i,j}\).

On a finite weighted graph, one convenient description of the supersymmetric representation starts from a random Schrödinger operator \(H_{\beta}\) whose off-diagonal entries are \(-W_{i,j}\) and whose diagonal entries are \(2\beta_i\), where the random potential \(\beta\) has an explicit law. Writing \(G=H_{\beta}^{-1}\) and fixing a pinned vertex \(i_{0}\), one then defines the effective field by \(e^{u_i}=G(i_0,i)/G(i_0,i_0)\). This \(u\)-field is the horospherical marginal of the \(H^{2|2}\) model and, in the Sabot--Tarrès representation \cite{Sabot2015,Sabot2019}, it is also the random environment of the standard time-changed VRJP: conditionally on \(u\), the process is a Markov jump process with rates \(\frac12 W_{i,j}e^{u_j-u_i}\); see also \cite[Theorem~3]{Sabot2017}. It is this effective-field/random-environment representation that we use throughout.

On \(\mathbb{Z}^{d}\), this connection has led to a detailed picture of the phase transition for the VRJP. In dimensions \(d\ge 3\), the process has both recurrent and transient regimes depending on the reinforcement strength, while in dimensions \(1\) and \(2\) it is recurrent \cite{Sabot2015,sabot19:_polyn_vertex_reinf_jump_proces,kozma19:_power_vrjp}. In dimension two, finer quantitative decay questions for the associated field are tied to mass-gap-type problems and remain largely open.

On the hierarchical lattice, at the critical spectral dimension \(d=2\), geometric decay in the hierarchical scale \(d_{\mathbb{X}}\), equivalently power-law decay in \(r_{\mathbb{X}}=2^{d_{\mathbb{X}}}\), is the natural hierarchical analogue of a mass-gap-type decay question; compare this with the massive hierarchical Gaussian free field. The hierarchical lattice, introduced by Dyson \cite{Dyson1969,Dyson_1971}, provides a rigid scale structure in which renormalization and coarse graining can be carried out exactly, making it a natural testing ground for such questions.

Recent work \cite{Disertori2022,disertori23:_trans,disertori24:_fluct} established long-range order for the hierarchical \(H^{2|2}\) model when \(d>2\). The present paper complements that picture by treating the recurrent side: for \(d<2\), and for \(d=2\) under strong reinforcement, we prove geometric decay across hierarchical scales for suitable fractional moments and deduce recurrence of the VRJP. Thus, away from the critical point \(d=2\), the phase behavior is governed by the spectral dimension, whereas the critical case is more delicate. We do not address here the weak-reinforcement regime at \(d=2\).

Our proof is based on the fractional-moment method, originally developed in the context of Anderson localization \cite{Aizenman1993}. A direct path expansion on the hierarchical lattice suffers from the large number of long-range edges, whose combinatorial growth weakens standard estimates. The key additional input is an exact coarse-graining identity for the hierarchical \(H^{2|2}\) field, coming from the linear reinforcement structure and developed in \cite{disertori23:_trans}. This identity reduces the problem to a recursion over block scales and controls the path enumeration of the expansion, leading to geometric decay across hierarchical scales for the relevant fractional moments.

The paper is organized as follows. In Section~\ref{h:35597662-da75-4406-b6e2-7bfa14c99d4f}, we define the hierarchical lattice, the VRJP, and the finite-volume random Schrödinger representation, and we state the main results. In Section~\ref{h:9caf97d6-90d8-45b9-a4f1-eca42efe563c}, we prove recurrence and transience of the VRJP using estimates on the associated \(H^{2|2}\) field. Section~\ref{h:6fd27b89-918f-4c37-84a1-e95db8e62cef} contains the proof of the geometric decay estimates via the fractional-moment method and the coarse-graining analysis.
\section{Definition of models and results}
\label{h:35597662-da75-4406-b6e2-7bfa14c99d4f}
In this section, we define the hierarchical lattice, the weighted walk, the finite-volume \(H^{2|2}\) representation, and the precise statements proved later.

The vertex set of the hierarchical lattice is \(\mathbb{X}=\mathbb{N}^{\star}=\{1,2,\dots\}\). For \(k\geq 0\) and \(m\geq 1\), let
\[
B_{k,m}:=\{2^k(m-1)+1,\dots,m2^k\}.
\]
The sets \(B_{k,m}\) are the dyadic blocks of order \(k\). For \(i,j\in \mathbb{X}\), the hierarchical distance is defined by
\[
d_{\mathbb{X}}(i,j):=\min\{k\geq 0:\exists m\geq 1 \text{ such that } i,j\in B_{k,m}\}.
\]
Thus \(d_{\mathbb{X}}(i,j)\) is the first scale at which \(i\) and \(j\) belong to a common block. For example, \(d_{\mathbb{X}}(1,1)=0\), \(d_{\mathbb{X}}(1,7)=3\), and \(d_{\mathbb{X}}(i,2^n+1)=n+1\) for every \(1\le i\le 2^n\). The associated ultrametric distance is \(2^{d_{\mathbb{X}}(i,j)}\), but throughout the paper we work with the scale index \(d_{\mathbb{X}}\). We view \(\mathbb{X}\) as a complete graph: every pair of distinct vertices \(i,j\in \mathbb{X}\) is connected by an edge \(\{i,j\}\).

Fix real parameters \(\rho>1\) and \(\overline{W}>0\). For distinct vertices \(i,j\in \mathbb{X}\), we define the edge weights by
\begin{equation}\begin{aligned}
\label{eq-def-of-Wij}
W_{i,j}=\overline{W}(2\rho)^{-d_{\mathbb{X}}(i,j)}, \qquad W_{i,i}=0.
\end{aligned}\end{equation}
For every vertex \(i\), the total conductance is finite:
\[
\sum_{j\in \mathbb{X}}W_{i,j}=\overline{W}\sum_{k=1}^{\infty}2^{k-1}(2\rho)^{-k}=\frac{\overline{W}}{2(\rho-1)}<\infty.
\]
We also use the bounded self-adjoint operator \(P_W\) on \(\ell^2(\mathbb{X})\) defined by
\[
P_Wf(i)=\sum_{j\in \mathbb{X}}W_{i,j}f(j), \qquad f\in \ell^2(\mathbb{X}).
\]
The conductance-weighted random walk associated with \((W_{i,j})\) is the continuous-time Markov chain that jumps from \(i\) to \(j\) at rate \(W_{i,j}\); equivalently, its generator is
\[
L_Wf(i)=\sum_{j\ne i}W_{i,j}\bigl(f(j)-f(i)\bigr).
\]
The spectral dimension of \((\mathbb{X},(W_{i,j}))\) is
\[
d=2\frac{\log 2}{\log \rho},
\]
and the underlying conductance-weighted walk has a phase transition at \(d=2\); see \cite[Eq.~(3.1) and Theorem~3.1]{Kritchevski_2008}.

We now define the vertex-reinforced jump process (VRJP) \((Y_t)_{t\ge 0}\) on the infinite weighted graph \(\mathbb{X}\). Starting from \(Y_0=i_0\), when the process is at vertex \(i\) at time \(t\), it jumps to \(j\ne i\) at rate
\begin{equation}\begin{aligned}
\label{eq-jumprate-vrjp}
W_{i,j}\left(1+\int_0^t \mathbbm{1}_{Y_s=j}\,ds\right).
\end{aligned}\end{equation}
Although the graph is not locally finite, the model is well defined because \(\sum_{j\in \mathbb{X}}W_{i,j} < \infty\) for all \(i \in \mathbb{X}\), and the infinite-volume mixture construction applies in this setting; see \cite[Proposition~1 and Theorem~1(iii)]{Sabot2019} and the discussion on p.~2 of \cite{disertori23:_trans}. Up to a random time change, the VRJP is a mixture of Markov jump processes.

The associated random Schrödinger representation of the VRJP \((Y_{t})_{t\ge 0}\) is encoded by a positive real random field \((\beta_i)_{i\in \mathbb{X}}\). For every finite set \(U\subset \mathbb{X}\), the marginal law of \((\beta_i)_{i\in U}\) is characterized by the Laplace transform
\begin{equation}\begin{aligned}
\label{eq-laplace-transform-beta}
\mathbb{E}\left(e^{-\sum_{i\in U}\lambda_i\beta_i}\right)=e^{-\sum_{\{i,j\}\subset U}W_{i,j}(\sqrt{(1+\lambda_i)(1+\lambda_j)}-1)-\sum_{i\in U,\,j\notin U}W_{i,j}(\sqrt{1+\lambda_i}-1)}\prod_{i\in U}\frac{1}{\sqrt{1+\lambda_i}}.
\end{aligned}\end{equation}
The second exponential term is finite because \(\sum_{j\notin U}W_{i,j}<\infty\) for every \(i\in U\). Existence of the field follows from \cite[Proposition~1]{Sabot2019}. This field gives rise to the random Schrödinger operator
\[
H_\beta=2\beta-P_W,
\]
where \(2\beta\) denotes the multiplication operator by \((2\beta_i)_{i\in \mathbb{X}}\), so that
\[
H_\beta f(i)=2\beta_i f(i)-\sum_{j:j\ne i}W_{i,j}f(j), \qquad f\in \ell^2(\mathbb{X}).
\]
Almost surely, \(H_\beta\) defines a positive self-adjoint operator on its natural \(\ell^2(\mathbb X)\) domain. For the purposes of this paper, however, we only use its finite-volume restrictions.

\subsection{Finite box approximations}
\label{sec:org74480ba}
We now pass to finite-volume approximations with a wired boundary vertex.

For \(n\geq 1\), let \(\Lambda_n=\{1,2,\dots,2^n\}\) and add one extra vertex \(\delta_n\). We write \(\widetilde{\Lambda}_n:=\Lambda_n\cup\{\delta_n\}\), where \(\delta_n\) represents the exterior \(\mathbb{X}\setminus \Lambda_n\). For \(i\in \Lambda_n\), using \eqref{eq-def-of-Wij}, define the boundary weight by
\begin{equation}\begin{aligned}
\label{eq-edgeweight}
W_{\delta_n,i}:=\sum_{j\notin \Lambda_n}W_{i,j}
=\sum_{m\ge 1}2^{n+m-1}\,\overline{W}(2\rho)^{-(n+m)}
=\frac{\overline{W}\rho^{-n}}{2(\rho-1)}.
\end{aligned}\end{equation}
Here the second equality uses that, for each \(m\ge 1\), there are \(2^{n+m-1}\) vertices outside \(\Lambda_n\) at hierarchical distance \(n+m\) from \(i\).

By \cite[Lemma~2]{Sabot2019}, one can define a random variable \(\beta_{\delta_n}\) so that the law of \((\beta_i)_{i\in \widetilde{\Lambda}_n}\) has density
\begin{equation}\begin{aligned}
\label{eq-beta-density-Lambdatilda-n}
\left(\sqrt{\frac{2}{\pi}}\right)^{|\widetilde{\Lambda}_n|}\mathbbm{1}_{H_{\beta,\widetilde{\Lambda}_n}>0}e^{-\frac12\langle 1,H_{\beta,\widetilde{\Lambda}_n}1\rangle}\frac{1}{\sqrt{\det H_{\beta,\widetilde{\Lambda}_n}}}\prod_{i\in \widetilde{\Lambda}_n}d\beta_i,
\end{aligned}\end{equation}
where \(H_{\beta,\widetilde{\Lambda}_n}\) acts on \(\ell^2(\widetilde{\Lambda}_n)\) by
\[
H_{\beta,\widetilde{\Lambda}_n}f(i)=2\beta_i f(i)-\sum_{j\in \widetilde{\Lambda}\setminus \{ i\} }W_{i,j}f(j), \qquad i\in \widetilde{\Lambda}_n.
\]
Thus \(H_{\beta,\widetilde{\Lambda}_n}\) is the random Schrödinger matrix associated with the \(H^{2|2}\) measure on \(\widetilde{\Lambda}_{n}\).

The finite-volume model approximates the infinite-volume VRJP in the following sense. Let \(\mathbb{P}^{\mathbb{X}}_1\) denote the law of the discrete-time chain associated with the VRJP on \(\mathbb{X}\) started from \(1\in \mathbb{X}\), and let \(\mathbb{P}^{\widetilde{\Lambda}_n}_1\) be the corresponding law on \(\widetilde{\Lambda}_n\). If \(\tau_1^+\) is the first return time to \(1\) and \(\tau_{\delta_n}\) the first hitting time of \(\delta_n\), then Section~6.2 of \cite{Sabot2019} yields
\[
\lim_{n\to\infty}\mathbb{P}^{\widetilde{\Lambda}_n}_1(\tau_1^+<\tau_{\delta_n})=\mathbb{P}^{\mathbb{X}}_1(\tau_1^+<\infty).
\]
Hence recurrence and transience of the infinite-volume VRJP can be studied through these finite-volume approximations.

It is often more convenient to replace the variables \((\beta_i)_{i\in \widetilde{\Lambda}_n}\) by the effective \(u\)-field and a boundary variable. Let
\[
G_{\beta,\widetilde{\Lambda}_n}:=H_{\beta,\widetilde{\Lambda}_n}^{-1}, \qquad \gamma_n:=\frac{1}{2G_{\beta,\widetilde{\Lambda}_n}(\delta_n,\delta_n)},
\]
and define
\begin{equation}\begin{aligned}
\label{eq-def-u-i-n}
e^{u_i^{(n)}}=\frac{G_{\beta,\widetilde{\Lambda}_n}(\delta_n,i)}{G_{\beta,\widetilde{\Lambda}_n}(\delta_n,\delta_n)}, \qquad i\in \Lambda_n.
\end{aligned}\end{equation}
By convention, \(u_{\delta_n}^{(n)}=0\). Here the superscript \((n)\) records pinning at \(\delta_n\); later, for pinning at a general vertex \(j\), we will write \(u_i^{[j]}\).
The family \((u_i^{(n)})_{i\in \Lambda_n}\) is called the effective \(u\)-field of the pinned \(H^{2|2}\) model in e.g. \cite{Sabot2019}.

On finite graphs, we now explain that this same field is also the random environment of the standard time-changed VRJP \cite[Theorem~3]{Sabot2017}. Consider the VRJP on \(\widetilde{\Lambda}_{n}\) with \(Y_{0}=\delta_{n}\). Let \(L_{j}(t)=1+\int_{0}^{t} \mathbbm{1}_{Y_{s}=j}ds\) and \(S_{j}(t)=L_{j}(t)^{2}-1\). The time change \(D(t)=\sum_{i\in \widetilde{\Lambda}_{n}}(L_{i}(t)^{2}-1)\) is strictly increasing, and we set \(Z_{t}=Y_{D^{-1}(t)}\).
\[U_{i}^{(n)} := \frac{1}{2} \lim_{t\to \infty}\left(\log (S_{i}(t)+1)-\log(S_{\delta_{n}}(t)+1) \right)\]
exists almost surely for all \(i\in \widetilde{\Lambda}_{n}\) and the law of \((U_{i}^{(n)})_{i\in \widetilde{\Lambda}_{n}}\) is explicit (see, e.g., Eq. (1.4) of \cite{disertori23:_trans}). \((U_{i}^{(n)})_{i\in \Lambda_{n}}\) is equal in law to the effective field \((u_{i}^{(n)})_{i\in \Lambda_{n}}\) of the \(H^{2|2}\) model on \(\widetilde{\Lambda}_{n}\). In the sequel we write \((u_{i}^{(n)})\) instead of \((U_{i}^{(n)})\) when there is no ambiguity.

Conditionally on \((u_i^{(n)})\), the process \((Z_{t})_{t\ge 0}\) is a Markov jump process with jump rates \(\frac12 W_{i,j}e^{u_j^{(n)}-u_i^{(n)}}\) from \(i\) to \(j\); see \cite[Theorem~2]{Sabot2015} and \cite[Theorem~3]{Sabot2017}. Moreover, \cite[Eq.~(4.2)]{Sabot2019} shows that \((\beta_i)_{i\in \widetilde{\Lambda}_n}\) can be recovered from \((u_i^{(n)})_{i\in \widetilde{\Lambda}_n}\) and \(G_{\beta,\widetilde{\Lambda}_n}(\delta_n,\delta_n)\) via
\[
2\beta_i=\sum_{j\in \widetilde{\Lambda}_n}W_{i,j}e^{u_j^{(n)}-u_i^{(n)}}+\mathbbm{1}_{i=\delta_n}\frac{1}{G_{\beta,\widetilde{\Lambda}_n}(\delta_n,\delta_n)}, \qquad i\in \widetilde{\Lambda}_n.
\]
An important consequence of supersymmetry is the Ward identity
\begin{equation}\begin{aligned}
\label{eq-ward-identity-eu=1}
\mathbb{E}(e^{u_i^{(n)}})=1, \qquad i\in \widetilde{\Lambda}_n,
\end{aligned}\end{equation}
see for instance \cite[Eq.~(5.26)]{Disertori2017a}.

We will repeatedly use the following standard lemma; see \cite[Lemma~5.2]{Collevecchio2018} and \cite[Theorem~3]{Sabot2017}.
\begin{lemma}
  \label{lem1}
Let \(\mathcal{G}=(V,E,W)\) be a finite weighted undirected graph with edge weights \(W_{i,j}\). Let \(H_\beta\) be the random Schrödinger matrix defined by \(H_\beta(i,j)=-W_{i,j}\) for \(i\ne j\) and \(H_\beta(i,i)=2\beta_i\), where the law of \(\beta\) has density
\[
\left(\sqrt{\frac{2}{\pi}}\right)^{|V|}\mathbbm{1}_{H_\beta>0}e^{-\frac12\langle 1,H_\beta 1\rangle}\frac{1}{\sqrt{\det H_\beta}}\prod_{i\in V}d\beta_i.
\]
We call \(H_\beta\) the random Schrödinger matrix associated with the finite-volume \(H^{2|2}\) measure on \(\mathcal{G}\).

For any \(i\in V\), the law of \(\frac{1}{2 G_{\beta}(i,i)}\) is as follows: for all \(a>0\),
\begin{equation}\begin{aligned}
\label{eq-density-G-i-i}
\mathbb{P}\left( (2G_{\beta}(i,i))^{-1} <a\right) = \int_{0}^{a} \frac{1}{\sqrt{\pi t}} e^{-t}dt
\end{aligned}\end{equation}
That is, \((2G_{\beta}(i,i))^{-1}\) has Gamma distribution with shape \(1/2\) and rate \(1\). For any \(0 < s < \frac{1}{2}\), \(\mathbb{E}( G_{\beta}(i,i)^{s}) < \infty\). Moreover, \(G_{\beta}(i,i)\) is independent of the pinned field \(\{u_{j}^{(i)}: j\ne i\}\).

For \(i \in V\), define the effective \(u\)-field pinned at \(i\) by
\[e^{u^{(i)}_{j}}= \frac{G_{\beta}(i,j)}{G_{\beta}(i,i)},\ G_{\beta}=H_{\beta}^{-1},\]

\label{lem-gamma-dist-Gii}
\end{lemma}

In particular, for every \(0 < s < \frac12\),
\begin{equation}\begin{aligned}
\label{eq-euij-euji-are-equal}
\mathbb{E}(e^{s u_j^{(i)}})=\mathbb{E}(e^{s u_i^{(j)}}).
\end{aligned}\end{equation}
Indeed,
\[
\mathbb{E}(G_\beta(i,i)^s)\mathbb{E}(e^{s u_j^{(i)}})=\mathbb{E}(G_\beta(i,j)^s)=\mathbb{E}(G_\beta(j,i)^s)=\mathbb{E}(G_\beta(j,j)^s)\mathbb{E}(e^{s u_i^{(j)}}).
\]

Almost surely, \(H_{\beta,\widetilde{\Lambda}_n}\) is an M-matrix (see \cite[Section~6]{Sabot2017}), hence \(G_{\beta,\widetilde{\Lambda}_n}(i,j)>0\) for all \(i,j\). When there is no ambiguity, we simply write \(u_i\) instead of \(u_i^{(n)}\).

By Lemma \ref{lem1}, one can pin the field at any vertex \(j\), not only at \(\delta_n\). We denote the field pinned at \(j\) by \((u_i^{[j]}){i\in\widetilde\Lambda_n}\), where
\begin{equation}
  \label{eq:uijpinatj}
  e^{u_{i}^{[j]}}=\frac{G_{\beta,\widetilde{\Lambda}_n}(j,i)}{G_{\beta,\widetilde{\Lambda}_n}(j,j)}, \qquad i\in \widetilde\Lambda_n. 
\end{equation}
Throughout the sequel, the ambient graph \(\widetilde\Lambda_n\) is fixed by context, so we suppress the superscript \((n)\) unless we need to emphasize the boundary pinning at \(\delta_n\), which we denote by \(u_i^{(n)}\).

Since \(H_{\beta,\widetilde{\Lambda}_n}\) is almost surely positive definite, the map
\[
(\beta_i)_{i\in \widetilde{\Lambda}_n}\longmapsto \bigl((u_i)_{i\in \Lambda_n},\gamma_n\bigr)
\]
is a diffeomorphism; see \cite[Section~6.3]{Sabot2017}. In the random-environment representation of the time-changed VRJP \((Z_{t})_{t\ge 0}\), the associated discrete-time chain of \((Z_{t})_{t\ge 0}\) is a random conductance model with conductances
\[
W_{i,j}e^{u_i+u_j}.
\]
Thus the fluctuations of the \(u\)-field determine the long term behaviors of the reinforced walk.

\subsection{Results and strategies of proofs}
\label{sec:org2dbd951}
We now state the precise results proved in this paper.

\begin{theorem}
Consider the VRJP \((Y_t)_{t\ge 0}\) on the hierarchical lattice \((\mathbb{X},(W_{i,j}))\), where the weights are given by \eqref{eq-def-of-Wij}, started from an arbitrary vertex.
\begin{enumerate}
\item If \(d>2\), then almost surely every vertex is visited only finitely many times; in particular, the VRJP is transient.
\item If \(d<2\), then almost surely every vertex is visited infinitely many times; in particular, the VRJP is recurrent.
\item If \(d=2\) and \(\overline{W}\) is sufficiently small, then the VRJP is recurrent.
\end{enumerate}
\label{thm-vrjp-phase-trans}
\end{theorem}

\begin{remark}
At the critical value \(\rho=2\), following \cite{disertori23:_trans}, one may also consider the modified weights
\begin{equation}\begin{aligned}
\label{eq-def-Wij-2d}
W_{i,j}=\overline{W}4^{-d_{\mathbb{X}}(i,j)}Q\bigl(d_{\mathbb{X}}(i,j)\bigr),
\end{aligned}\end{equation}
where \(Q:\mathbb{N}_{\ge 1}\to (0,\infty)\) is assumed to satisfy: for some \(\alpha > 1\), there are \(c_{1},c_{2} >0\) and \(n_{0} \in \mathbb{N}\) such that for all \(n \ge n_{0}\),
\begin{equation}
  \label{eq:Qbound}
 c_{1} n^{\alpha} \le Q(n) \le c_{2} n^{\alpha}.
\end{equation}
Such a factor changes the critical decay only by a polynomial correction; heuristically, polynomial growth of \(Q\) pushes the model toward the transient side, whereas polynomial decay points in the opposite direction. When \(\alpha > 1\), an effective-resistance argument similar to Section~2 of \cite{disertori23:_trans} shows that the wired finite-volume resistances remain bounded, so the underlying conductance network is transient, which suggests transience of the VRJP with the edge weights in \eqref{eq-def-Wij-2d}. We record this only as background for a related modified critical regime; the present paper does not treat this variant further.
\end{remark}
Our next results are fractional-moment estimates for the normalized Green's function of the finite-volume random Schrödinger matrix. We first state the estimate for the field pinned at \(\delta_n\).
\begin{theorem}
Assume the weights \(W_{i,j}\) are given by \eqref{eq-def-of-Wij}. Let \(u^{(n)}\) be defined by \eqref{eq-def-u-i-n}, and fix \(0 < s < \frac12\). Then the following fractional-moment estimates hold uniformly in \(n\ge 1\). 
\begin{enumerate}
\item Suppose \(d<2\). For all \(\overline{W} > 0\), there exists \(C(\overline{W},d,s)>0\) such that, for every \(i\in \Lambda_n\),
\begin{equation}\begin{aligned}
\label{eq-FMM-eu-subcritical}
\mathbb{E}\bigl(e^{s u_i^{(n)}}\bigr)\le C(\overline{W},d,s)\rho^{-sn}.
\end{aligned}\end{equation}
\item Suppose \(d=2\), and \(\overline{W}\) is sufficiently small. Then there exists \(C(\overline{W},s)>0\) such that, for every \(i\in \Lambda_n\),
\begin{equation}\begin{aligned}
\label{eq-FMM-eu-critical}
\mathbb{E}\bigl(e^{s u_i^{(n)}}\bigr)\le C(\overline{W},s)c(\overline{W},s)^{-sn},
\end{aligned}\end{equation}
where
\[
c(\overline{W},s)=\frac{2}{\left(1+\left(1+\left(\frac{c_s\overline{W}}{4}\right)^s\right)\left(\frac{c_s\overline{W}}{4}\right)^s\right)^{1/s}},
\]
and
\[c_{s}^{s} = \mathbb{E}(G_{\beta}(i,i)^{s}) =  \frac{1}{2^{s}\sqrt{\pi}} \Gamma\left(\frac{1}{2}-s\right). \]
In particular, \(c(\overline{W},s)\to 2\) as \(\overline{W}\to 0\), so the bound decays geometrically in the scale \(n\) when \(\overline{W}\) is small enough such that \(c(\overline{W},s) > 1\).
\end{enumerate}
\label{thm-FMM-exp-u}
\end{theorem}

We also obtain the corresponding estimate for the field pinned at an arbitrary vertex.
\begin{theorem}
Fix \(0 < s < \frac12\) and \(n\ge 1\). Assume the weights \(W_{i,j}\) are given by \eqref{eq-def-of-Wij}. Recall notation \( u_i^{[j]}\) defined in \eqref{eq:uijpinatj}.
\begin{enumerate}
\item Suppose \(d<2\). For all \(\overline{W} >0\), there exists \(C(\overline{W},d,s)>0\) such that, for every \(i,j\in \Lambda_n\),
\begin{equation}\begin{aligned}
\label{eq-FMM-eu-subcritical-ij}
\mathbb{E}\bigl(e^{s u_i^{[j]}}\bigr)\le C(\overline{W},d,s)(2\rho)^{-s d_{\mathbb{X}}(i,j)}.
\end{aligned}\end{equation}
\item Suppose \(d=2\), and \(\overline{W}\) is sufficiently small. Then there exists \(C(\overline{W},s)>0\) such that, for every \(i,j\in \Lambda_n\),
\begin{equation}\begin{aligned}
\label{eq-FMM-eu-critical-ij}
\mathbb{E}\bigl(e^{s u_i^{[j]}}\bigr)\le C(\overline{W},s)\bigl(2c(\overline{W},s)\bigr)^{-s d_{\mathbb{X}}(i,j)},
\end{aligned}\end{equation}
where \(c(\overline{W},s)\) is the same constant as in Theorem~\ref{thm-FMM-exp-u}.
\end{enumerate}
\label{thm-FMM-exp-uij}
\end{theorem}

The meaning of these estimates is different from the usual Euclidean one. On \(\mathbb{Z}^d\), the fractional-moment method produces exponential decay of \(\mathbb{E}(G(i,j)^{s})\) in the Euclidean distance between \(i,j\). On the hierarchical lattice, by contrast, there is always a direct edge between \(i\) and \(j\), with weight of order \((2\rho)^{-d_{\mathbb{X}}(i,j)}\). Thus the natural scale variable is \(d_{\mathbb{X}}(i,j)\), and the direct one-edge contribution already gives the lower bound
\[
\mathbb{E}\bigl(G(i,j)^s\bigr)\gtrsim (2\rho)^{-s d_{\mathbb{X}}(i,j)}.
\]
In the subcritical regime \(d<2\), the upper bounds of Theorems~\ref{thm-FMM-exp-u} and \ref{thm-FMM-exp-uij} therefore match the natural lower bound up to a multiplicative constant, so the hierarchical-scale decay rate is optimal.

A naive path expansion is nevertheless insufficient, because the number of self-avoiding walks grows very rapidly on the complete hierarchical graph. The key additional input here is an exact coarse-graining property of the \(H^{2|2}\) field: if a set of vertices looks identical from the outside, then it can be merged into a single effective vertex without changing the form of the field distribution. Combined with uniform fractional moments of diagonal Green's functions, this yields a recursion over block scales and controls the number of paths in the expansion.
\section{Recurrence and transience of the VRJP, proof of Theorem \ref{thm-vrjp-phase-trans}}
\label{h:9caf97d6-90d8-45b9-a4f1-eca42efe563c}
\subsection{Recurrence of the VRJP in the recurrent regimes}
\label{sec:orgc39258d}
Consider the VRJP \(Y_t\) on \(\mathbb X\) started from \(1\), and let \(T_n\) be the first exit time from \(\Lambda_n\). Up to time \(T_n^-\), the process has the same law as the finite-volume VRJP \(\widetilde Y_t^{(n)}\) on \(\widetilde\Lambda_n\), started from \(1\), up to its first visit to \(\delta_n\). This provides a coupling of VRJPs starting from 1 on graphs \(\widetilde{\Lambda}_{n},\ n\ge 1\) with the VRJP on \(\mathbb X\), up to time \(T_{n}^{-}\).

By \cite[Theorem~1(iii)]{Sabot2019}, after a random time change, \(\widetilde Y_t^{(n)}\) is a mixture of Markov jump processes, and its associated discrete-time chain is a random conductance model with conductances
\[
C^{(n)}_{i,j}=W_{i,j}e^{u_i^{(1)}+u_j^{(1)}},
\]
where \((u_i^{(1)})_{i\in \widetilde\Lambda_n}\) is the effective field of the pinned \(H^{2|2}\) model on \(\widetilde\Lambda_n\), defined by
\begin{equation}\begin{aligned}
\label{eq-e-u-1i-Lambdatildan}
e^{u^{(1)}_{i}} = \frac{G_{\beta,\widetilde{\Lambda}_{n}}(1,i)}{G_{\beta,\widetilde{\Lambda}_{n}}(1,1)},\ \ i\in \widetilde{\Lambda}_{n}.
\end{aligned}\end{equation}
Here \(G_{\beta,\widetilde\Lambda_n}\) is the same Green's function as in \eqref{eq-def-u-i-n}.

Let \(\tau_1^+\) be the first return time to \(1\) of \(\widetilde Y_t^{(n)}\). Let \({P}_{1}^{\widetilde{\Lambda}_{n}}\) be the quenched probability, that is, the law of \((\widetilde{Y}_{t}^{(n)})\) given \((u_{i}^{(1)})_{i\in \widetilde{\Lambda}_{n}}\). By \cite[Eq.~(2.4)]{Lyons2015},
\begin{equation}
\label{eq-hit-prob-ceff}
{P}_{1}^{\widetilde{\Lambda}_{n}}(\tau_{\delta_n}<\tau_1^+) = \frac{1}{\pi_1^{(n)}} C_{\mathrm{eff}}^{(n)}(1,\delta_n),
\end{equation}
where \(\pi_1^{(n)}=\sum_{j\in \widetilde\Lambda_n} W_{1,j}e^{u_j^{(1)}}\), and \(C_{\mathrm{eff}}^{(n)}(1,\delta_n)\) is the effective conductance of the electrical network with conductances \(C_{i,j}^{(n)}\) on the finite graph \(\widetilde\Lambda_n\).

By Rayleigh's monotonicity principle (\cite[Section~2.4]{Lyons2015}), for \(i,j\in \Lambda_n\), set \(W_{i,j}^+=\infty\), and denote by a superscript \(+\) the quantities computed from these enlarged conductances. Then
\[
C_{\mathrm{eff}}^{(n)}(\delta_n,1)\le C_{\mathrm{eff}}^{(n),+}(\delta_n,1).
\]
Since \(e^{u_i^{(1)}+u_j^{(1)}}>0\) almost surely for every \(i,j\in \Lambda_n\), we have \(C_{i,j}^+=\infty\) on \(\Lambda_n\). Thus all vertices in \(\Lambda_n\) can be identified, and the resulting network between \(1\) and \(\delta_n\) is a parallel one. Therefore
\begin{equation}
\label{eq-ceff-collapsed}
C_{\mathrm{eff}}^{(n),+}(\delta_n,1)=\sum_{i\in \Lambda_n}C_{\delta_n,i}=\sum_{i\in \Lambda_n}W_{\delta_n,i}e^{u_i^{(1)}+u_{\delta_n}^{(1)}}.
\end{equation}
There are two cases: \(d<2\), and \(d=2\) with \(Q\equiv 1\) and \(\overline W\) sufficiently small. We first treat \(d<2\). Fix \(0 < s < \frac12\) so close to \(\frac12\) that \(2(2\rho)^{-s}<1\). By \eqref{eq-edgeweight},
\[
W_{\delta_n,i}=\frac{\overline W\rho^{-n}}{2(\rho-1)},
\]
so
\[
C_{\mathrm{eff}}^{(n),+}(\delta_n,1)=W_{\delta_n,i}e^{u_{\delta_n}^{(1)}}\sum_{i\in \Lambda_n}e^{u_i^{(1)}}.
\]
Using \eqref{eq-FMM-power-bound} and Theorem~\ref{thm-FMM-exp-uij}, we obtain
\[
\begin{aligned}
\mathbb P\left(\sum_{i\in \Lambda_n}e^{u_i^{(1)}}>\chi_1\right)
&\le \chi_1^{-s}\,\mathbb E\left(\left(\sum_{i\in \Lambda_n}e^{u_i^{(1)}}\right)^s\right) \\
&\le \chi_1^{-s}\sum_{i\in \Lambda_n}\mathbb E\bigl(e^{s u_i^{(1)}}\bigr) \\
&\le C(\overline W,d,s)\,\chi_1^{-s}\sum_{k=0}^n 2^k(2\rho)^{-sk}
\le C_1\chi_1^{-s},
\end{aligned}
\]
where \(C_1<\infty\) because \(2(2\rho)^{-s}<1\). Also, by the Ward identity \eqref{eq-ward-identity-eu=1},
\[
\mathbb P\bigl(e^{u_{\delta_n}^{(1)}}>\chi_2\bigr)\le \chi_2^{-1}.
\]

Next we bound \(\pi_1^{(n)}\) from below. By \cite[Proposition~6]{Sabot2019},
\[
e^{u_2^{(1)}}>\frac{W_{1,2}}{2\beta_2},
\]
therefore
\[
\pi_1^{(n)}>W_{1,2}e^{u_2^{(1)}}>\frac{W_{1,2}^2}{2\beta_2}.
\]
By \cite[Proposition~1]{Sabot2019}, the variable \(2\beta_2\) has an inverse-Gaussian law with parameters depending only on \(W_2=\sum_{i\in\mathbb X}W_{2,i}\), hence its upper tail is exponentially small. Consequently, there exist constants \(c_0,C_0>0\), independent of \(n\), such that for all \(\chi_3>0\),
\[
\mathbb P\bigl(\pi_1^{(n)}<\chi_3\bigr)
\le \mathbb P\left(\frac{W_{1,2}^2}{2\beta_2}<\chi_3\right)
= \mathbb P\left(2\beta_2>\frac{W_{1,2}^2}{\chi_3}\right)
\le C_0 e^{-c_0/\chi_3}.
\]

On the event
\[
\left\{\sum_{i\in \Lambda_n}e^{u_i^{(1)}}\le \chi_1\right\}\cap
\left\{e^{u_{\delta_n}^{(1)}}\le \chi_2\right\}\cap
\left\{\pi_1^{(n)}\ge \chi_3\right\},
\]
we have
\[
 P_1^{\widetilde\Lambda_n}(\tau_{\delta_n}<\tau_1^+)
\le \frac{C_{\mathrm{eff}}^{(n),+}(\delta_n,1)}{\pi_1^{(n)}}
\le \frac{\overline W}{2(\rho-1)}\rho^{-n}\frac{\chi_1\chi_2}{\chi_3}.
\]
Choose
\[
\chi_1=\rho^{n/6},\qquad \chi_2=\rho^{n/6},\qquad \chi_3=\rho^{-n/6}.
\]
Then there exists \(C_2>0\) such that
\[
\mathbb P\left( P_1^{\widetilde\Lambda_n}(\tau_{\delta_n}<\tau_1^+)\le C_2\rho^{-n/2}\right)
\ge 1-C_0e^{-c_0\rho^{n/6}}-C_1\rho^{-ns/6}-\rho^{-n/6}.
\]
The probabilities of complementary events are summable in \(n\), and the quenched VRJPs on \(\widetilde{\Lambda}_{n}\) up to time \(\tau_{\delta_{n}}\) are coupled, so by Borel--Cantelli, almost surely,
\[
\exists n_{0}, \forall n\ge n_{0}, P_{1}^{\widetilde{\Lambda}_{n}}(\tau_{\delta_{n}} < \tau_{1}^{+}) < C_{2} \rho^{-n/2}.
\]
Therefore, using the coupling, almost surely,
\[
\exists n_{0}, \forall n\ge n_{0}, P_{1}^{\mathbb{X}}(\tau_{\Lambda_{n}^{c}} > \tau_{1}^{+}) \ge  1- C_{2} \rho^{-n/2}.
\]

Passing to the limit \(n\to\infty\) gives
\[
 P_1^{\mathbb X}(\tau_1^+<\infty)=1.
\]
By Theorem 1.(iii) of \cite{Sabot2019}, the VRJP (up to time change) on \(\mathbb{X}\) is a random conductance model, thus recurrence of one state implies infinitely many returns to that state, which proves recurrence for \(d<2\).

When \(d=2\), \(Q\equiv 1\), and \(\overline W\) is sufficiently small, the same estimate gives
\[
\mathbb P\left(\sum_{i\in \Lambda_n}e^{u_i^{(1)}}>\chi_1\right)
\le C(\overline W,s)\chi_1^{-s}\sum_{k=0}^n \lambda^k,
\qquad \lambda:=2\bigl(2c(\overline W,s)\bigr)^{-s}.
\]
Choose \(s\in(1/3,\frac12)\) close enough to \(\frac12\) and then \(\overline W\) small enough so that \(1<\lambda<2^s\). Pick \(A\) with \(\lambda^{1/s}<A<2\), and then choose \(\varepsilon>0\) so small that \(A2^{3\varepsilon}<2\). Setting
\[
\chi_1=A^n,\qquad \chi_2=2^{\varepsilon n},\qquad \chi_3=2^{-\varepsilon n},
\]
we obtain
\[
\mathbb P\left(\sum_{i\in \Lambda_n}e^{u_i^{(1)}}>\chi_1\right)\le C(\overline W,s)\bigl(\lambda/A^s\bigr)^n,
\qquad \mathbb P\bigl(e^{u_{\delta_n}^{(1)}}>\chi_2\bigr)\le 2^{-\varepsilon n},
\]
and
\[
\mathbb P\bigl(\pi_1^{(n)}<\chi_3\bigr)\le C_0e^{-c_0 2^{\varepsilon n}}.
\]
On the complement of these events,
\[
 P_1^{\widetilde\Lambda_n}(\tau_{\delta_n}<\tau_1^+)\le C\,2^{-n}A^n2^{3\varepsilon n},
\]
which is summable because \(A2^{3\varepsilon}<2\). The same Borel--Cantelli argument yields recurrence in the critical strong-reinforcement regime.
\subsection{Transience of VRJP when \(d>2\)}
\label{sec:org1910b0a}
By \cite[Theorem~3.2]{disertori23:_trans}, for every \(m\ge 1\),
\[
\sup_L \sup_{j\in \Lambda_L} \mathbb E_{\Lambda_L}\bigl(\cosh(u_j^{(L)})^m\bigr)\le c_H(m)<\infty.
\]
The only new point relative to \cite{disertori23:_trans} is to translate these field estimates into transience of the VRJP through the random-conductance representation.

Recall that \(C_{i,j}\) denotes the random conductances of the discrete-time chain associated with the VRJP. To prove transience, it suffices to estimate the effective resistance between \(1\) and \(\delta_L\) in the wired finite-volume network:
\[
R^{L}(C,1,\delta_{L}) = \inf_{(\theta_{e})} \sum_{e} \frac{\theta_{e}^2}{C_{e}},
\]
where the infimum is over all unit current flows \(\theta_e\) from \(1\) to \(\delta_L\); see \cite[Section~2.4, in particular Exercise~2.13]{Lyons2015}.

Let \(\overline\theta\) be a unit current flow realizing the effective resistance \(R^L(W,1,\delta_L)\) for the underlying conductance network \((W_{i,j})\). Since the conductance-weighted walk with generator \(L_W\) is transient when \(d>2\) \cite[Eq.~(3.1) and Theorem~3.1]{Kritchevski_2008}, the resistances \(R^L(W,1,\delta_L)\) remain bounded as \(L\to\infty\). On the other hand,
\[
R^{L}(C,1,\delta_{L}) \leq \sum_{e} \frac{\overline{\theta}_{e}^2}{W_{e}} \frac{W_{e}}{C_{e}}.
\]
The expected number of visits to \(1\) before hitting \(\delta_L\) is bounded by \(\mathbb{E}(C_{1} R^{L}(C,1,\delta_{L}))\), so it is enough to control this expectation.

Suppose first that there exists \(c>0\) such that for every edge \(e\) of \(\Lambda_L\),
\[
\sum_{f\ni 1} \mathbb E_{\Lambda_L}\left(\frac{C_f}{C_e}\right)\le \frac{c}{W_e}.
\]
Then, writing \(e=\{i,j\}\),
\begin{equation*}\begin{aligned}
\mathbb{E}(C_{1} R^{L}(C,1,\delta_{L})) &\leq \mathbb{E}( \sum_{k\sim 1}C_{1,k} \sum_{e} \frac{\overline{\theta}_{e}^2}{W_{e}} \frac{W_{e}}{C_{e}})\\
&  \leq \sum_{1\sim k} \sum_{i\sim j} \frac{\overline{\theta}_{i,j}^2}{W_{i,j}} \mathbb{E}\left(\frac{C_{1,k}}{C_{i,j}}\right) W_{e} \\
& \leq  c\sum_{i\sim j} \frac{\overline{\theta}_{i,j}^2}{W_{i,j}}.
\end{aligned}\end{equation*}
Therefore transience follows from the transience of the conductance model with weights \(W\). Since the finite-volume expected number of visits is uniformly bounded in \(L\), monotone convergence gives a finite infinite-volume expected number of visits, and hence the number of visits is finite almost surely.

It remains to verify the assumption. We have
\begin{equation*}\begin{aligned}
\mathbb{E}\left( \frac{C_{1,k}}{C_{i,j}} \right) & = \mathbb{E}\left( \frac{W_{1,k} e^{u_{1}+u_{k}}}{W_{i,j} e^{u_{i}+u_{j}}} \right)\\
&\leq \frac{W_{1,k}}{W_{i,j}} \mathbb{E}(e^{4u_{1}})^{\frac{1}{4}} \mathbb{E}(e^{4u_{k}})^{\frac{1}{4}}\mathbb{E}(e^{-4u_{i}})^{\frac{1}{4}} \mathbb{E}(e^{-4u_{j}})^{\frac{1}{4}}.
\end{aligned}\end{equation*}
The uniform moment bounds above show that the right-hand side is bounded by a constant times \(W_{1,k}/W_{i,j}\), and summing over \(k\sim 1\) gives the required estimate.

\section{Fractional moment estimates}
\label{h:6fd27b89-918f-4c37-84a1-e95db8e62cef}
We start with the proof of part~1 of Theorem \ref{thm-FMM-exp-u}. The argument follows the classical fractional-moment method from \cite{Aizenman1993}. We mainly use two tools. The first is that, for the \(H^{2|2}\) model on a finite graph with zero boundary; that is, with density given by \eqref{eq-beta-density-Lambdatilda-n}, the law of \((2G_{\beta,\widetilde{\Lambda}_{n}}(i,i))^{-1}\) is Gamma with shape \(\frac{1}{2}\) and rate \(1\) for every \(i\in \widetilde{\Lambda}_n\); this is summarized in Lemma~\ref{lem-gamma-dist-Gii}.

The second tool is an exact coarse-graining identity for the \(H^{2|2}\) model on the hierarchical lattice; see \cite[Corollary~2.3]{Disertori2022}. We restate the version needed here as Lemma~\ref{lem-effective-h22} and Corollary~\ref{coro-effective-model-u}. For convenience, we first introduce some notation:
\subsection{Estimate for \(G(i,\delta)\)}
\label{sec:orgda94508}
\begin{definition}
  (Coarse-Grained Graph Partitioning)
  Let \(\mathcal{G} = (V, E, W)\) be a finite weighted graph with vertex set \(V\), edge set \(E\), and weight function \(W: E \to \mathbb{R}^+\). Partition the vertex set \(V\) into disjoint subsets \(V = B_1 \sqcup B_2 \sqcup \cdots \sqcup B_k\), where each \(B_j\) forms a block of the partition. Construct a new graph \(\mathcal{G}' = (V', E', W')\) by mapping each subset \(B_j\) to a coarse-grained vertex \(B_j \in V'\) (this slightly abusive notation will entail no ambiguity). The weight \(W'_{B_i, B_j}\) of an edge \(\{B_{i},B_{j}\}\) in \(\mathcal{G}'\) is defined as:
\begin{equation}\begin{aligned}
\label{eq-block-W-general}
W'_{B_i, B_j} = \sum_{\substack{u \in B_i,\\ v \in B_j}} W_{u,v} 
\end{aligned}\end{equation}
The graph \(\mathcal{G}'\) is called the coarse-grained graph associated with the partition \(V=B_1\sqcup \cdots \sqcup B_k\).  
\end{definition}

We now apply this definition to  \(\widetilde{\Lambda}_{n}\) with some particular choices of partitions. Note that its vertex set is \(\{1, 2, \dots, 2^n, \delta_n\}\). Define \(\widetilde{\Lambda}_{n}^{(0)}\) as the graph obtained by identifying groups of vertices: \(B_0' = \{1\}\), \(B_0 = \{2\}\), \(B_1 = \{3, 4\}\), \(B_2 = \{5, 6, 7, 8\}\), and continuing with \(B_j = \{2^{j} + 1, \dots, 2^{j+1}\}\) for \(1 \leq j \leq n-1\). The resulting vertex set of \(\widetilde{\Lambda}_{n}^{(0)}\) is \(\{B_0', B_0, B_1, \dots, B_{n-1}, \delta_n\}\), and it forms a complete graph.

The edge weight \(W_{B_j, B_k}\) is defined as the sum of the edge weights between all pairs of vertices in \(B_j\) and \(B_k\), given by, for \(j,k<n\),
\begin{equation}\begin{aligned}
\label{eq-block-W-BjBk}
W_{B_j, B_k} = 2^{j+k} \overline{W} (2\rho)^{-(j \vee k) - 1}, \ W_{\delta_{n},B_{j}}=2^{j} \frac{\overline{W} \rho^{-n}}{2(\rho-1)}. 
\end{aligned}\end{equation}
Indeed, if \(j\le k\), then every pair \((u,v)\in B_j\times B_k\) has hierarchical distance \(d_{\mathbb X}(u,v)=k+1\), so each summand in \eqref{eq-block-W-general} equals \(\overline W(2\rho)^{-(k+1)}\). Since \(|B_j|=2^j\) and \(|B_k|=2^k\), there are \(2^{j+k}\) such pairs, which gives
\[
W_{B_j,B_k}=2^{j+k}\overline W(2\rho)^{-(k+1)}=2^{j+k}\overline W(2\rho)^{-(j\vee k)-1}.
\]
The case \(j>k\) is symmetric. For the boundary vertex, \eqref{eq-edgeweight} shows that \(W_{\delta_n,i}=\overline W\rho^{-n}/(2(\rho-1))\) for every \(i\in\Lambda_n\), hence
\[
W_{\delta_n,B_j}=\sum_{i\in B_j}W_{\delta_n,i}=2^j\frac{\overline W\rho^{-n}}{2(\rho-1)}.
\]
Moreover, the adjacent coarse-grained weights satisfy
\[
W_{B_j,B_{j+1}}=\frac{2}{\rho}W_{B_{j-1},B_j}.
\]
Since \(d=2\log 2/\log \rho\), we have \(d<2\iff \rho>2\) and \(d>2\iff \rho<2\). Thus successive coarse-graining decreases these effective edge weights when \(d<2\), and increases them when \(d>2\). See also the discussion in \cite[Section~3.2]{HuangWangZeng2025}.

For \(k =  1, \dots, n-1\), define the graph \(\widetilde{\Lambda}_n^{(k)}\) with vertex groups
\begin{equation}\begin{aligned}
\label{eq-BkBkprimeetc}
&B_{k'} = \{1, \dots, 2^k\}, \\
& B_{k} = \{2^k + 1, \dots, 2^{k+1}\},\\
& B_{k+1}=\{2^{k+1}+1, \dots,2^{k+2}\} \\
& \dots \\
& B_{n-1} = \{2^{n-1} + 1, \dots, 2^n\}. 
\end{aligned}\end{equation}
The vertex set of \(\widetilde{\Lambda}_n^{(k)}\) is \(\{B_{k'}, B_{k}, B_{k+1},\dots, B_{n-1}, \delta_n\}\), forming a complete graph. The edge weight \(W_{B_j, B_m}\) is inherited from \(\widetilde{\Lambda}_n^{(0)}\), or equivalently, obtained from summing edge weights between all pairs of vertices in \(B_j\) and \(B_k\), for \(j,k\in \widetilde{\Lambda}_{n}^{(k)}\).

This construction progressively coarsens the graph by increasing the minimum size of the vertex groups from \(2^0 = 1\) in \(\widetilde{\Lambda}_n^{(0)}\) to \(2^k\) in \(\widetilde{\Lambda}_n^{(k)}\).

\begin{figure}[ht]
  \centering
  \begin{tikzpicture}[
      level distance=1cm,          
      level 1/.style={sibling distance=15em},
      level 2/.style={sibling distance=7.5em},
      level 3/.style={sibling distance=3.75em},
      level 4/.style={sibling distance=1.5em},
      every node/.style={circle, draw, minimum size=5mm, inner sep=0pt}
    ]

    \node {\(B_{4'}\)}
    child { node {\(B_{3'}\)}
      child { node {\(B_{2'}\)}
        child { node {\(B_{1'}\)}
          child { node {1} }
          child { node {2} }
        }
        child { node {\(B_{1}\)}
          child { node {3} }
          child { node {4} }
        }
      }
      child { node {\(B_{2}\)}
        child { node {}
          child { node {5} }
          child { node {6} }
        }
        child { node {}
          child { node {7} }
          child { node {8} }
        }
      }
    }
    child { node {\(B_{3}\)}
      child { node {}
        child { node {}
          child { node {9} }
          child { node {10} }
        }
        child { node {}
          child { node {11} }
          child { node {12} }
        }
      }
      child { node {}
        child { node {}
          child { node {13} }
          child { node {14} }
        }
        child { node {}
          child { node {15} }
          child { node {16} }
        }
      }
    };

    \node at (5,0) {\(\delta_4\)};
  \end{tikzpicture}
\caption{The box \(\widetilde{\Lambda}_4\); in this coarse-grained graph we use the convention \(B_4=\delta_4\).}
  \label{fig:tikzpic}
\end{figure}

We now state an equivalent version of Corollary 2.3 of \cite{Disertori2022}:
\begin{lemma}
Given a graph \(\mathcal{G}=(V,E,W)\), consider the random Schrödinger matrix \(H_{\beta}\) on \(\mathcal{G}\), assume moreover that there are vertices \(B=\{i_1,\cdots,i_{k}\}\subset V\) such that the following holds:
\[W_{i,j}=W_{i,j'},\ \forall i\notin B,\forall j,j'\in B.\]
Fix \(\delta\in V \setminus B\), consider \(\mathcal{G}'\), the coarse-grained graph of \(\mathcal{G}\) by the partition \(V=B\sqcup \left(\bigsqcup_{j\notin B} \{j\}\right)\), let \(H'_{\beta}\) be the random Schrödinger matrix on \(\mathcal{G}'\).  Define \(e^{u^{(\delta)}_{i}} = \frac{G_{\beta}(\delta,i)}{G_{\beta}(\delta,\delta)}\), let \(e^{u^{(\delta)}_{B}} = \frac{1}{\operatorname{card}B} \sum_{j\in B} e^{u^{(\delta)}_{j}}\), also define \(e^{u^{(\delta)'}_{j}} = \frac{G'_{\beta}(\delta,j)}{G'_{\beta}(\delta,\delta)}\), then we have equality in distribution
\[\left\{u_{B}^{(\delta)},\ (u_{i}^{(\delta)})_{i\notin B}\right\} \overset{(\ell)}{=} \left\{u^{(\delta)'}_{B}, (u^{(\delta)'}_{i})_{i\notin B}\right\} .\]
\label{lem-effective-h22}
\end{lemma}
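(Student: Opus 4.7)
The plan is to exploit the rank-one structure that the symmetry hypothesis imposes on $H_\beta$ and then reduce to an explicit matching of densities between $\mathcal{G}$ and $\mathcal{G}'$. Under the assumption $W_{i,j} = W_{i,j'} =: w_i$ for all $i \notin B$ and all $j, j' \in B$, the off-diagonal block of $H_\beta$ between $B$ and $V \setminus B$ is $-\mathbf{w}\,\mathbf{1}_B^{\top}$, a rank-one matrix where $\mathbf{w} = (w_i)_{i \notin B}$. Writing $H_\beta$ in block form with $A = H_\beta|_{(V\setminus B)\times (V\setminus B)}$ and $D = H_\beta|_{B\times B}$, the Schur complement of $D$ becomes $S = A - \alpha\,\mathbf{w}\mathbf{w}^{\top}$, where the scalar $\alpha := \mathbf{1}_B^{\top} D^{-1} \mathbf{1}_B$ captures the entire dependence of $S$ on the internal $B$-randomness.

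A direct block-inverse computation then gives $G_\beta(\delta, i) = [S^{-1}]_{\delta, i}$ for $i \notin B$ and $\sum_{j\in B} G_\beta(\delta, j) = \alpha\, [S^{-1}\mathbf{w}]_\delta$, so that
\[ e^{u_B^{(\delta)}} = \frac{\alpha}{|B|}\, \frac{[S^{-1}\mathbf{w}]_\delta}{[S^{-1}]_{\delta,\delta}}, \qquad e^{u_i^{(\delta)}} = \frac{[S^{-1}]_{\delta,i}}{[S^{-1}]_{\delta,\delta}} \text{ for } i\notin B. \]
Running the analogous block computation on $\mathcal{G}'$ with $\mathbf{w}' = |B|\mathbf{w}$ and $D' = 2\beta'_B$ scalar yields formulas of the same shape, but with $\alpha$ replaced by $|B|^2/(2\beta'_B)$ and $S$ by $S' = A' - \frac{|B|^2}{2\beta'_B}\mathbf{w}\mathbf{w}^{\top}$. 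Hence, pointwise under the identification $\beta'_B = |B|^2/(2\alpha)$ together with sharing $(\beta_i)_{i\notin B}$ between the two models, one has $S = S'$ and both effective-field vectors agree.

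To upgrade this pointwise identification to equality in distribution, it remains to show that the pushforward of the $H^{2|2}$ law \eqref{eq-beta-density-Lambdatilda-n} on $\mathcal{G}$ under the map $(\beta_i)_{i\in V} \mapsto ((\beta_i)_{i\notin B},\ |B|^2/(2\alpha))$ coincides with the $H^{2|2}$ law on $\mathcal{G}'$. Using the block identity $\det H_\beta = \det D \cdot \det S$ together with the quadratic form decomposition $\langle 1, H_\beta 1\rangle = \langle 1, A 1\rangle + \langle \mathbf{1}_B, D\mathbf{1}_B\rangle - 2\sum_{i\notin B,\,j\in B} W_{i,j}$, the problem reduces to integrating out the internal degrees of freedom in $B$ along the level sets of $\alpha$. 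The main obstacle is the Jacobian bookkeeping in this fiber integral: since $\alpha$ is a rational function of the $\beta_j$'s, one must verify that the resulting one-dimensional marginal of $|B|^2/(2\alpha)$ matches the Gamma-type density that $\beta'_B$ inherits from \eqref{eq-beta-density-Lambdatilda-n} on $\mathcal{G}'$. This is precisely the content of Corollary 2.3 of \autocite{Disertori2022}, which I would invoke to close the argument; Lemma \ref{lem-gamma-dist-Gii} can be used as a sanity check on the resulting one-point marginal.
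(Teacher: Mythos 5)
Your Schur-complement reduction is correct and genuinely useful: with $W_{i,j}=w_i$ for $j\in B$ the cross block is rank one, $S=A-\alpha\,\mathbf{w}\mathbf{w}^{\top}$ with $\alpha=\mathbf{1}_B^{\top}D^{-1}\mathbf{1}_B$, and your formulas for $e^{u^{(\delta)}_i}$, $i\notin B$, and $e^{u^{(\delta)}_B}$ check out, as does the observation that under the identification $\beta'_B=|B|^2/(2\alpha)$ (with $(\beta_i)_{i\notin B}$ shared) the two effective fields coincide pointwise. So you have correctly reduced the lemma to the statement that the pushforward of the $H^{2|2}$ law on $\mathcal{G}$ under $(\beta_i)_{i\in V}\mapsto\bigl((\beta_i)_{i\notin B},\,|B|^2/(2\alpha)\bigr)$ is the $H^{2|2}$ law on $\mathcal{G}'$.

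The gap is that this reduced statement is exactly the content of the lemma (the paper introduces the lemma as an equivalent version of Corollary 2.3 of \autocite{Disertori2022} and supplies its own proof precisely to be self-contained), so invoking that corollary to "close the argument" is circular rather than a proof; the Jacobian/fiber-integration step you flag as the main obstacle is the entire mathematical content, and you do not carry it out. Two further cautions: matching "the one-dimensional marginal of $|B|^2/(2\alpha)$" would not suffice even if you did it — under the $\mathcal{G}'$ law $\beta'_B$ is not independent of $(\beta_i)_{i\notin B}$, so you need the full joint law (your earlier phrasing in terms of the pushforward is the right target); and the fiber integral over the level sets of the rational function $\alpha(\beta_B)$ is not a routine computation. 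The paper avoids it entirely by a probabilistic argument: realize $(u^{(\delta)}_i)$ as the asymptotic random environment of the VRJP started at $\delta$, note that because of the equal weights and linear reinforcement the jump rate from any $i\notin B$ into $B$ depends only on the total local time accumulated in $B$, so the process lumped over $B$ is exactly the VRJP on $\mathcal{G}'$, and the aggregated field $e^{u^{(\delta)}_B}$ is the corresponding environment there. If you want to keep your analytic route, you must actually perform the integration over the internal $\beta_B$ variables (e.g.\ via the Laplace transform \eqref{eq-laplace-transform-beta} with a common $\lambda$ on $B$ after conditioning, or a change of variables isolating $\alpha$), which is a substantial computation you have only sketched.
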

\begin{proof}[Proof of Lemma \ref{lem-effective-h22}]
The joint law of \((u^{\delta}_{i})_{i\in V}\) can be realized as the random environment of the VRJP \(Y_{t}\) on \(\mathcal{G}\) with starting point \(\delta\) and initial local time 1, see e.g. Theorem 2 in \cite{Sabot2015} or Theorem 3 in \cite{Sabot2017}. We now examine \(Y_{t}\) while conceptualizing \(B\) as a single vertex; that is, when \(Y_{t}\) is located in \(B\), we do not distinguish between the specific vertices within \(B\). This can be interpreted as the VRJP \(Y'\) on \(\mathcal{G}'\), also starting from \(\delta\). When the process \(Y_{t}\) is outside \(B\), say at vertex \(i\), the probability of jumping to \(B\) is identical in both \(\mathcal{G}\) and \(\mathcal{G}'\), the jump rate from \(i\) to \(B\) for \(Y_{t}\) is
\[\sum_{j\in B} W_{i,j} e^{u_j-u_i} = |B| W_{i,j} e^{-u_i} \left(\frac{\sum_{j\in B} e^{u_j}}{|B|}\right) = W_{i,B} e^{u_{B}-u_i}.\]
Even though the process will transition between different sites within \(B\), the total local time accumulated in \(B\) remains consistent for both processes, the identity in distribution hence follows.
\end{proof}

We can apply this lemma to our coarse-grained graphs \(\widetilde{\Lambda}_{n}^{(k)}\), and we have the following convenient corollary
\begin{corollary}
Consider the effective \(H^{2|2}\) field \((u^{(n)}_{i})\) on \(\widetilde{\Lambda}_{n}\) with pinning point \(\delta_{n}\). Recall that \(\widetilde{\Lambda}_{n}^{(k)}\) is the coarse-grained graph obtained from the partition \(\{B_{k'}, B_{k}, B_{k+1},\dots, B_{n-1}, \delta_n\}\). Let \((u^{(n)'}_{i})\) be the effective \(H^{2|2}\) field on \(\widetilde{\Lambda}_{n}^{(k)}\), and define \(u^{(n)}_{B_{j}}\) by
\[
e^{u^{(n)}_{B_{j}}} = \frac{1}{\operatorname{card}B_{j}} \sum_{x\in B_{j}} e^{u^{(n)}_{x}}.
\]
Then we have equality in distribution
\[
\bigl(u_{B_{j}}^{(n)}\bigr)_{j} \overset{(\ell)}{=} \bigl(u_{B_{j}}^{(n)'}\bigr)_{j}.
\]
\label{coro-effective-model-u}
\end{corollary}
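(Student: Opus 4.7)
The plan is to deduce Corollary \ref{coro-effective-model-u} from Lemma \ref{lem-effective-h22} by iterated application, contracting one block at a time. The first step is to verify the symmetry hypothesis of Lemma \ref{lem-effective-h22} for each block $B_j$ in the partition $\{B_{k'}, B_k, B_{k+1}, \dots, B_{n-1}, \delta_n\}$: for any vertex $i$ outside $B_j$ and any two vertices $v, v' \in B_j$, one has $W_{i,v} = W_{i,v'}$. This is immediate from \eqref{eq-def-of-Wij}: since $B_j$ is, by construction, an order-$j$ hierarchy, the smallest hierarchy containing $i$ together with any single vertex of $B_j$ already contains all of $B_j$, so $d_{\mathbb X}(i,v) = d_{\mathbb X}(i,v')$ and both edge weights coincide. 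The analogous statement for the boundary vertex $\delta_n$ follows from \eqref{eq-edgeweight}. The same remark applies to $B_{k'}$.

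Next, I would apply Lemma \ref{lem-effective-h22} iteratively: first contract $B_{n-1}$ inside $\widetilde{\Lambda}_n$, then $B_{n-2}$ in the resulting graph, and so on down to $B_k$, and finally $B_{k'}$. At each step the lemma produces an equality in distribution jointly between
\[
u^{(\delta_n)}_{B_j} \;=\; \log\tfrac{1}{|B_j|}\sum_{v\in B_j} e^{u^{(\delta_n)}_v}
\]
on the current graph and the corresponding coarse-grained effective $u$-field on the contracted graph, with the pinning point $\delta_n$ unchanged (it is never inside any block). Because at the moment $B_j$ is contracted only blocks with larger index have been touched, and those are entirely disjoint from $B_j$, the atoms of $B_j$ still carry their original $u$-values, so the $u^{(\delta_n)}_{B_j}$ produced by the lemma agrees exactly with the $u^{(n)}_{B_j}$ defined in the statement of the corollary.

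The second check is that the symmetry hypothesis of Lemma \ref{lem-effective-h22} survives the iteration. After collapsing $B_{n-1}$ to a single super-vertex, that super-vertex sees every atom $v \in B_{n-2}$ through a single induced weight $W_{B_{n-1},v}$ computed via \eqref{eq-block-W-general}; this weight is the same for every $v\in B_{n-2}$ because the ingredients $W_{u,v}$ with $u\in B_{n-1}$ were already all equal. The same stability property recurs at every subsequent contraction, so each application of the lemma is legitimate. Chaining the equalities in distribution across all contractions yields the joint identity $(u^{(n)}_{B_j})_j \overset{(\ell)}{=} (u^{(n)'}_{B_j})_j$. Moreover, the graph reached after all contractions is exactly $\widetilde{\Lambda}_n^{(k)}$ with weights \eqref{eq-block-W-BjBk}, because the stepwise contracted weights agree with the single global contraction by associativity of the summation in \eqref{eq-block-W-general}.

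The main thing requiring care, and where I expect the only non-trivial bookkeeping, is verifying that the hierarchical symmetry is preserved throughout the induction and that the iterated contracted weights coincide with those given by the single full contraction \eqref{eq-block-W-BjBk}. Both facts are consequences of the hierarchical self-similarity of $W$ and of the associativity of \eqref{eq-block-W-general}, so the argument is conceptually clean; once these invariants are recorded, the corollary follows by induction on the number of blocks contracted.
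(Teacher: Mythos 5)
Your proposal is correct and takes essentially the same route the paper intends: Corollary \ref{coro-effective-model-u} is obtained by iterating Lemma \ref{lem-effective-h22} one block at a time, after verifying the symmetry hypothesis $W_{i,v}=W_{i,v'}$ for $v,v'$ in a hierarchy block (and its stability under successive contractions, together with consistency of the contracted weights with \eqref{eq-block-W-BjBk}), which is exactly the bookkeeping the paper leaves implicit. The only slight imprecision is the phrase that untouched atoms ``still carry their original $u$-values'': each application of the lemma gives a \emph{joint} equality in distribution, not an almost-sure identity, so the chaining should be read as composing joint distributional identities applied to measurable functionals (the block averages) of the preserved coordinates --- which is what your argument effectively does.
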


Our strategy is to first prove the recursive bound of Proposition~\ref{prop-1-recursiveBdd-uBk}, which controls the \(s\)-moment of \(e^{u_{B_{k'}}}\) by moments at coarser scales, and then iterate that bound to obtain Proposition~\ref{prop-2-FMMbound-uBk}.

Before starting the proof, fix \(k\in\{0,\dots,n-1\}\) and consider the effective \(H^{2|2}\) field on \(\widetilde{\Lambda}_{n}^{(k)}\), whose vertices are
\[
B_{k'},B_k,B_{k+1},\dots,B_{n-1},\delta_n.
\]
We suppress the pinning superscript and write
\[
e^{u_i}=\frac{G_{\beta,\widetilde{\Lambda}_n^{(k)}}(\delta_n,i)}{G_{\beta,\widetilde{\Lambda}_n^{(k)}}(\delta_n,\delta_n)}.
\]
The relevant graph will always be clear from the expectation \(\mathbb E_{\widetilde\Lambda_n^{(k)}}(\cdot)\). We also choose a coupling such that there exists a random variable \(\gamma\), independent of all these effective fields, with
\[
G_{\beta,\widetilde{\Lambda}_n^{(k)}}(\delta_n,\delta_n)=\frac{1}{2\gamma}, \qquad k=0,\dots,n-1.
\]

By Corollary \ref{coro-effective-model-u}, we have for every bounded continuous function \(f\),
\[\mathbb{E}_{\widetilde{\Lambda}_{n}}(f(u_{1}))= \mathbb{E}_{\widetilde{\Lambda}_{n}^{(0)}}(f(u_1)).\]

The following proposition gives a recursive inequality that bounds the fractional moment at a given scale by a sum over fractional moments at a coarser scale. Iterating this inequality in a suitable way will yield the desired fractional moment estimates.
\begin{proposition}
For any \(n\) and \(k<n\), any \(0 < s < \frac{1}{2}\), we have
\begin{equation}\begin{aligned}
 \mathbb{E}_{\widetilde{\Lambda}_{n}^{(k)}}(e^{{s u_{B_{k'}}}})  
 \leq  \left(1+ c_s^{s} W_{B_{k'},B_{k}}^{s}\right) c_s^{s} \left(\sum_{\ell=k+1}^{n} W_{B_{k},B_{\ell}}^{s} \mathbb{E}_{\widetilde{\Lambda}_{n}^{(\ell)}}(e^{{s u_{B_{\ell}}}})\right).
\end{aligned}\end{equation}
\label{prop-1-recursiveBdd-uBk}
\end{proposition}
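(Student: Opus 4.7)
The plan is a one-step cavity expansion of $e^{u_{B_{k'}}}$ on $\widetilde{\Lambda}_n^{(k)}$, combined with two structural inputs. The first is the hierarchical edge-weight symmetry $W_{B_{k'},B_\ell} = W_{B_k,B_\ell}$ for every $\ell \geq k+1$ (adopting the convention $B_n := \delta_n$), which holds because $B_{k'}$ and $B_k$ are sibling blocks of equal cardinality $2^k$ at identical hierarchical distance from any further block. The second is the Gamma(1/2) fact from Lemma \ref{lem-gamma-dist-Gii}: for every pinning point $i$, the diagonal Green's function $G_\beta(i,i)$ is independent of $\{u_j^{(i)}\}_{j\neq i}$ with $\mathbb{E}(G_\beta(i,i)^s) = c_s^s$. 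These two ingredients together will produce the two $c_s^s$ factors entering the prefactor $(1 + c_s^s W_{B_{k'},B_k}^s)\,c_s^s$.

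The cavity identity derived from $H_\beta G_\beta = I$ at row $B_{k'}$, divided by $G(\delta_n,\delta_n)$, reads (with $u_m := u_m^{(\delta_n)}$ and $e^{u_{\delta_n}} := 1$)
$$2\beta_{B_{k'}}\, e^{u_{B_{k'}}} = W_{B_{k'},B_k}\, e^{u_{B_k}} + \sum_{\ell=k+1}^{n} W_{B_k,B_\ell}\, e^{u_{B_\ell}},$$
where the symmetry $W_{B_{k'},B_\ell} = W_{B_k,B_\ell}$ for $\ell \geq k+1$ has already been used. The analogous identity at $B_k$ gives a second linear equation in $(e^{u_{B_{k'}}}, e^{u_{B_k}})$, and solving the resulting $2\times 2$ system (equivalently, Schur-complementing the $\{B_{k'},B_k\}$-block) yields
$$e^{u_{B_{k'}}} = \frac{2\beta_{B_k} + W_{B_{k'},B_k}}{(2\beta_{B_{k'}})(2\beta_{B_k}) - W_{B_{k'},B_k}^2}\;\sum_{\ell=k+1}^n W_{B_k,B_\ell}\, e^{u_{B_\ell}}.$$
Raising to the $s$-th power and applying the subadditivity $(x+y)^s \leq x^s+y^s$ twice bounds $e^{s u_{B_{k'}}}$ by
$$\Big[\frac{1}{\bigl(2\beta_{B_{k'}} - W_{B_{k'},B_k}^2/(2\beta_{B_k})\bigr)^s} + \frac{W_{B_{k'},B_k}^s}{\bigl((2\beta_{B_{k'}})(2\beta_{B_k}) - W_{B_{k'},B_k}^2\bigr)^s}\Big]\sum_{\ell=k+1}^n W_{B_k,B_\ell}^s\, e^{s u_{B_\ell}}.$$

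Taking expectations, the identity \eqref{eq-euij-euji-are-equal} lets me swap to the pinning at $B_{k'}$, and Lemma \ref{lem-gamma-dist-Gii} then supplies an independent Gamma(1/2) variable $\gamma_{B_{k'}} = 1/(2G_\beta(B_{k'},B_{k'}))$ whose $-s$-moment is $c_s^s$. The first bracketed term is essentially $(2\gamma_{B_{k'}}^{\mathrm{eff}})^{-s}$ after integrating out $B_k$, and integrates to $c_s^s$. The second, through a further re-pinning at $B_k$ inside the reduced model (legitimate by Lemma \ref{lem-effective-h22}), produces a second independent Gamma(1/2) factor of $c_s^s$, multiplied by the extra $W_{B_{k'},B_k}^s$. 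Combining the two contributions yields exactly $(1+c_s^s W_{B_{k'},B_k}^s)\,c_s^s$ in front. Finally, Corollary \ref{coro-effective-model-u} converts each $\mathbb{E}_{\widetilde{\Lambda}_n^{(k)}}(e^{s u_{B_\ell}})$ on the right into $\mathbb{E}_{\widetilde{\Lambda}_n^{(\ell)}}(e^{s u_{B_\ell}})$, since successive coarse-graining leaves the marginal law of $u_{B_\ell}$ invariant.

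The main technical obstacle is the decoupling in the expectation: a priori $\beta_{B_{k'}}$ and $\beta_{B_k}$ are correlated, through the joint $H^{2|2}$ law, with every far-block variable $e^{u_{B_\ell}}$, so the Gamma factors cannot be pulled out of the expectation naively. The remedy is the change of variables $2\beta_i = 2\gamma_i + \sum_m W_{i,m}\, e^{u_m^{(i)}}$ implementing the pinning at $i$, which exposes $\gamma_i$ as independent Gamma(1/2). One must perform this first at $B_{k'}$ on $\widetilde{\Lambda}_n^{(k)}$ and then at $B_k$ on the reduced $H^{2|2}$-model obtained after pinning at $B_{k'}$, carefully tracking which pinning the $u$-field is associated with at each step and using \eqref{eq-euij-euji-are-equal} to migrate between pinnings without accumulating extraneous constants. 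This careful bookkeeping is where the bulk of the proof lies.
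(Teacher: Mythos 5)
Your algebraic setup is fine: the weight symmetry \(W_{B_{k'},B_\ell}=W_{B_k,B_\ell}\) for \(\ell\geq k+1\) does hold on \(\widetilde{\Lambda}_n^{(k)}\), the two cavity rows of \(H_\beta G_\beta=I\) pinned at \(\delta_n\) are correct, the \(2\times 2\) solve giving \(e^{u_{B_{k'}}}=\frac{2\beta_{B_k}+W_{B_{k'},B_k}}{(2\beta_{B_{k'}})(2\beta_{B_k})-W_{B_{k'},B_k}^2}\sum_{\ell>k}W_{B_k,B_\ell}e^{u_{B_\ell}}\) is exact, and the subadditivity steps are harmless. The genuine gap is the expectation step: your bracketed local factor is a function of \(\beta_{B_{k'}},\beta_{B_k}\), while each \(e^{s u_{B_\ell}}\) (pinned at \(\delta_n\)) is a function of the \emph{entire} \(\beta\)-field, including \(\beta_{B_{k'}}\) and \(\beta_{B_k}\); so the product does not factorize, and the claimed prefactor \((1+c_s^sW_{B_{k'},B_k}^s)c_s^s\) requires exact factorization, not a H\"older-type decoupling. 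Your proposed remedy does not supply it: after the change of variables \(2\beta_{B_{k'}}=2\gamma_{B_{k'}}+\sum_m W_{B_{k'},m}e^{u^{(B_{k'})}_m}\), the far factor \(e^{u^{(\delta_n)}_{B_\ell}}\) is still a nontrivial function of \(\gamma_{B_{k'}}\) (it is \emph{not} a function of the \(u^{(B_{k'})}\)-field alone, since \(G(\delta_n,B_\ell)/G(\delta_n,\delta_n)\) genuinely depends on \(\beta_{B_{k'}}\)), so Lemma \ref{lem-gamma-dist-Gii} does not let you integrate the bracket out to \(c_s^s(1+c_s^sW^s)\) at fixed far field; and Eq. \eqref{eq-euij-euji-are-equal} only exchanges the pinning point inside a standalone expectation, not inside a product with additional \(\beta\)-dependent factors. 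The same objection applies to the second term: bounding it by \(W^sG(B_{k'},B_{k'})^sG(B_k,B_k)^s\) is fine pointwise, but neither diagonal Green's function is independent of \(e^{su^{(\delta_n)}_{B_\ell}}\).

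This decoupling is precisely what the paper's proof is engineered to achieve, and it is the part your sketch defers to ``careful bookkeeping.'' The paper writes \(G(B_{k'},\delta_n)\) as a path sum and performs a last-exit decomposition at \(B_{k'}\) (and then at \(B_k\)): each extracted diagonal factor \(G(B_{k'},B_{k'})\), resp. \(G(B_k,B_k)\), multiplies a \emph{restricted} path sum avoiding \(B_{k'}\), resp. \(\{B_{k'},B_k\}\), which is measurable with respect to the remaining \(\beta\)'s, so Lemma \ref{lem-gamma-dist-Gii} gives true independence and the two \(c_s^s\) factors; only \emph{after} that does it drop the avoidance constraints by monotonicity, identify \(G(B_\ell,\delta_n)=G(\delta_n,\delta_n)e^{u_{B_\ell}}\), and apply Corollary \ref{coro-effective-model-u}. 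To close your argument you would have to reinstate exactly this restricted-resolvent (Schur complement against the graph with \(B_{k'},B_k\) deleted) structure, at which point your proof coincides with the paper's; as written, the key inequality is not established.
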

Here and below, when \(\ell=n\), the term inside \(\mathbb{E}_{\widetilde{\Lambda}_{n}^{(n)}}\) is interpreted with \(B_{n}=\delta_{n}\) and \(e^{u_{B_{n}}}=1\).

\begin{proposition}
For any \(n\) and \(k<n\), and any \(0 < s < \frac{1}{2}\), there is a constant \(C(\overline{W},d,s)\), depending only on \(\overline{W},d,s\), such that
\[\mathbb{E}_{\widetilde{\Lambda}_{n}^{(k)}}(e^{{s u_{B_{k}}}}) \leq C(\overline{W},d,s)  \rho^{- {s (n-k)}} \text{ when } d<2, \]
and
\[\mathbb{E}_{\widetilde{\Lambda}_{n}^{(k)}}(e^{{s u_{B_{k}}}}) \leq C(\overline{W},d,s)  c(\overline{W},s)^{- {s (n-k)}} \text{ when } d=2\]
where
\[c(\overline{W},s) = \frac{2}{\left( 1+ \left(1+   \left(\frac{c_s\overline{W}}{4} \right)^{s}\right) \left( \frac{c_s\overline{W}}{4}  \right)^{s}\right)^{\frac{1}{s}}}.\]
\label{prop-2-FMMbound-uBk}
\end{proposition}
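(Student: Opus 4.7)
I would set $a_k := \mathbb{E}_{\widetilde{\Lambda}_n^{(k)}}(e^{s u_{B_k}})$ and first observe that in $\widetilde{\Lambda}_n^{(k)}$ the two blocks $B_{k'}$ and $B_k$ both have cardinality $2^k$ and, by direct inspection of \eqref{eq-block-W-BjBk}, carry identical edge weights to every other vertex (including $\delta_n$). Swapping them is therefore a graph automorphism of the weighted graph, so the pinned field is swap-invariant in law and $a_k = \mathbb{E}_{\widetilde{\Lambda}_n^{(k)}}(e^{s u_{B_{k'}}})$. Proposition~\ref{prop-1-recursiveBdd-uBk} then becomes the one-step recursion
\[
a_k \le D_k \Bigl( \sum_{\ell=k+1}^{n-1} W_{B_k,B_\ell}^{s}\, a_\ell + W_{B_k,\delta_n}^{s} \Bigr),\qquad D_k := \bigl(1+c_s^{s}W_{B_{k'},B_k}^{s}\bigr) c_s^{s},
\]
in which only $a_\ell$ with $\ell > k$ appears and $D_k$ is uniformly bounded by some $D = D(\upsilon, d, s)$. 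The next step is to iterate the recursion down to the boundary (where $a_n = \mathbb{E}(e^{s u_{\delta_n}}) = 1$), producing a Feenberg-type expansion
\[
a_k \le \sum_{m\ge 1}\sum_{k<\ell_1<\cdots<\ell_{m-1}<n} D_k D_{\ell_1}\cdots D_{\ell_{m-1}}\, W_{B_k,B_{\ell_1}}^{s}\cdots W_{B_{\ell_{m-1}},\delta_n}^{s},
\]
and then bound the right-hand side combinatorially, with the analysis bifurcating between $d=2$ and $d<2$.

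\textbf{The $d=2$ case.} When $\rho = 2$, equations \eqref{eq-block-W-BjBk} and \eqref{eq-edgeweight} specialise to $W_{B_k,B_\ell}^{s} = (\upsilon/4)^{s}\, 2^{-s(\ell-k)}$ and $W_{B_k,\delta_n}^{s} = (\upsilon/2)^{s}\, 2^{-s(n-k)}$. The powers of $2$ telescope along any path $k = \ell_0 < \cdots < \ell_{m-1} < \ell_m = n$, so the product of edge weights equals $(\upsilon/4)^{(m-1)s}(\upsilon/2)^{s}\, 2^{-s(n-k)}$ independently of the intermediate $\ell_i$'s. Since there are $\binom{n-k-1}{m-1}$ such paths, the binomial theorem gives
\[
a_k \le D(\upsilon/2)^{s}\, 2^{-s(n-k)}\,\bigl(1+D(\upsilon/4)^{s}\bigr)^{n-k-1}.
\]
Setting $x := (c_s\upsilon/4)^{s}$, a direct computation verifies $1+D(\upsilon/4)^{s} = 1+x+x^{2} = (2/c(\upsilon,s))^{s}$, so the right-hand side rearranges to $C\, c(\upsilon,s)^{-s(n-k)}$ with an explicit $C = C(\upsilon, s)$, exactly the announced bound.

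\textbf{The $d<2$ case and the main obstacle.} For $\rho > 2$ the product of edge weights along a path no longer telescopes: after normalising by $\rho^{-s(n-k)}$, each $m$-step path retains a $\Sigma$-dependent factor $(2/\rho)^{s\Sigma}$ with $\Sigma := \sum_{i=1}^{m-1}\ell_i$, multiplied by an $\ell$-independent piece of size $\upsilon^{ms}\rho^{-2s(m-1)}$ up to bounded factors. The sum of $(2/\rho)^{s\Sigma}$ over increasing tuples $k<\ell_1<\cdots<\ell_{m-1}<n$ is the elementary symmetric polynomial $e_{m-1}$ in the values $\{(2/\rho)^{s\ell}\}_\ell$, which I would control by the inequality
\[
e_{m-1}(x_1,\dots,x_N) \le \frac{1}{(m-1)!}\bigl(\textstyle\sum_i x_i\bigr)^{m-1}.
\]
Because $(2/\rho)^{s} < 1$ in this regime, $\sum_\ell(2/\rho)^{s\ell}$ is finite, so the resulting series in $m$ takes the form $\sum_{m\ge 1}A^{m-1}/(m-1)! = e^{A}$ with $A = A(\upsilon,d,s) < \infty$, giving a uniform constant $C(\upsilon,d,s)$. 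The main obstacle is precisely this combinatorial explosion of paths on the near-complete hierarchical graph: only the factorial denominator $(m-1)!$ coming from the increasing (hence unordered) nature of the tuples can tame the exponential growth of path counts, and in the borderline case $d=2$ this combinatorial balance singles out $c(\upsilon,s)$ via the binomial self-consistency $1+x+x^{2} = (2/c)^{s}$.
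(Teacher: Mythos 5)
Your proposal is correct and follows essentially the same route as the paper: symmetrize $B_k\leftrightarrow B_{k'}$, iterate Proposition~\ref{prop-1-recursiveBdd-uBk} into a sum over increasing index paths ending at $\delta_n$, pull out the telescoping factor $\rho^{-s(n-k)}$, and tame the path entropy --- your binomial count at $d=2$ and the bound $e_{m-1}\le \frac{1}{(m-1)!}\bigl(\sum_\ell x_\ell\bigr)^{m-1}$ for $d<2$ are just a repackaging of the paper's identity $\sum_{k=k_0<\cdots<k_\ell=n}\prod_i A_{k_i}\le\prod_{i=k}^{n-1}(1+A_i)$ followed by $\prod_i(1+A_i)\le\exp\sum_i A_i$, and your $d=2$ computation reproduces exactly the constant $c(\upsilon,s)$ via $1+x+x^2=(2/c)^{s}$. (The only blemish is the claimed per-step size $\upsilon^{ms}\rho^{-2s(m-1)}$ in the $d<2$ case, which should be $\upsilon^{ms}(2\rho)^{-(m-1)s}$ up to constants; this is immaterial since the argument only uses finiteness of the resulting exponent $A$.)
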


\begin{proof}[Proof of Theorem \ref{thm-FMM-exp-u}]
It suffices to let \(k=0\) in Proposition~\ref{prop-2-FMMbound-uBk}.
\end{proof}

\begin{proof}[Proof of Proposition \ref{prop-1-recursiveBdd-uBk}]
Fix \(0 < s < \frac{1}{2}\) and \(k\in\{0,\dots,n-1\}\), and write \(B_n=\delta_n\). By \cite[Proposition~6]{Sabot2019},
\begin{equation}\begin{aligned}
\label{eq-A-1}
\mathbb{E}_{\widetilde{\Lambda}_{n}^{(k)}}(G(\delta_{n},\delta_{n})^{s}) \mathbb{E}_{\widetilde{\Lambda}_{n}^{(k)}}(e^{{ s u_{B_{k'}}}}) & = \mathbb{E}_{\widetilde{\Lambda}_{n}^{(k)}}(G(B_{k'},\delta_{n})^{s})\\
& = \mathbb{E}_{\widetilde{\Lambda}_{n}^{(k)}}\left(\left(\sum_{\sigma:B_{k'}\to \delta_{n}} \left( \frac{W_{\sigma}}{(2 \beta)_{\sigma}}\right)\right)^{s}\right)
\end{aligned}\end{equation}
where the sum is over the set \(B_{k'} \to \delta_{n}\) of all paths \(\sigma\) going from \(B_{k'}\) to \(\delta_{n}\) in the graph \(\widetilde{\Lambda}_{n}^{(k)}\), and \(W_{\sigma} = \prod_{j=0}^{\ell-1} W_{\sigma_{j},\sigma_{j+1}}\) where \(\sigma\) is of length \(\ell\); and \((2\beta)_{\sigma} = \prod_{j=0}^{\ell} (2 \beta)_{\sigma_{j}}\), and \(\beta\) is the random potential of the \(H^{2|2}\) model on \(\widetilde{\Lambda}_{n}^{(k)}\).

We split the path sum according to the first jump after the last visit to \(B_{k'}\). Either the path next jumps to \(B_k\), in which case we denote by \(B_{k'}\Rightarrow B_k\to \delta_n\) the set of such paths, or it jumps directly to some block \(B_\ell\) with \(\ell>k\), in which case we denote by \(B_{k'}\Rightarrow B_{>k}\to \delta_n\) the set of such paths. Thus
\begin{equation}\begin{aligned}
\label{eq-pathsum-2-parts}
\sum_{\sigma:B_{k'}\to \delta_{n}} \left( \frac{W_{\sigma}}{(2 \beta)_{\sigma}}\right)= \sum_{\sigma:B_{k'}\Rightarrow B_{k}\to \delta_{n}} \left( \frac{W_{\sigma}}{(2 \beta)_{\sigma}}\right)+ \sum_{\sigma:B_{k'}\Rightarrow B_{>k}\to \delta_{n}} \left( \frac{W_{\sigma}}{(2 \beta)_{\sigma}}\right).
\end{aligned}\end{equation}
The following property will be used frequently throughout our argument: if \(x_1,\cdots,x_{m} >0\) and \(0<\alpha<1\), then
\begin{equation}\begin{aligned}
\label{eq-FMM-power-bound}
(x_1 + \cdots +x_{m})^{\alpha} \leq x_1^{\alpha} + \cdots +x_{m}^{\alpha}, 
\end{aligned}\end{equation}
though we will not mention it each time.

To analyze the first sum in Eq. \eqref{eq-pathsum-2-parts}, corresponding to paths that pass through the adjacent block \(B_k\), we perform a standard path decomposition based on the last visit to \(B_k'\) and \(B_k\).

For the first class of paths, we decompose the path at the last visit to \(B_{k'}\) and at the last visit to \(B_k\). The initial segment contributes the factor \(G(B_{k'},B_{k'})\); after the last visit to \(B_{k'}\), the path jumps to \(B_k\); after the last visit to \(B_k\), it jumps to some \(B_\ell\) with \(\ell>k\); and the remaining segment then goes from \(B_\ell\) to \(\delta_n\) without visiting \(B_{k'}\) or \(B_k\). Formally,
\begin{equation*}\begin{aligned}
&\sum_{B_{k'} \Rightarrow B_{k} \to  \delta_{n} } \frac{W_{\sigma}}{(2\beta)_{\sigma}}\\
& = G(B_{k'},B_{k'}) W_{B_{k'},B_{k}} \sum_{\sigma':B_{k} \xrightarrow[]{\cancel{B_{k'}}} B_{k}} \frac{W_{\sigma'}}{(2\beta)_{\sigma'}} \sum_{\ell=k+1}^{n} W_{B_{k},B_{\ell}} \sum_{\sigma': B_{\ell} \xrightarrow[]{\cancel{B_{k'},B_{k}}} \delta_{n}} \frac{W_{\sigma'}}{(2\beta)_{\sigma'}}
\end{aligned}\end{equation*}
where \(\sigma':B_{k} \xrightarrow[]{\cancel{B_{k'}}} B_{k}\) means the set of paths going from \(B_{k}\) to \(B_{k}\) without using the vertex \(B_{k'}\); similarly, \(B_{\ell} \xrightarrow[]{\cancel{B_{k'},B_{k}}} \delta_{n}\) denotes the set of paths going from \(B_{\ell}\) to \(\delta_{n}\) without visiting \(\{B_{k'},B_{k}\}\).

Under the law \(\mathbb{E}_{\widetilde{\Lambda}_{n}^{(k)}}\), the factor \(G(B_{k'},B_{k'})\) depends only on the diagonal variable at \(B_{k'}\), whereas the remaining path sums are measurable with respect to the field away from \(B_{k'}\) after its last visit. By Lemma~\ref{lem-gamma-dist-Gii}, \(G(B_{k'},B_{k'})\) is independent of the remaining factor. Hence
\begin{equation*}\begin{aligned}
&\mathbb{E}_{\widetilde{\Lambda}_{n}^{(k)}}\left( \left(\sum_{B_{k'} \Rightarrow B_{k} \to  \delta_{n} } \frac{W_{\sigma}}{(2\beta)_{\sigma}}\right)^{s}\right) \\
& = c_s^{s} \mathbb{E}_{\widetilde{\Lambda}_{n}^{(k)}}\left( \left(W_{B_{k'},B_{k}} \sum_{\sigma':B_{k} \xrightarrow[]{\cancel{B_{k'}}} B_{k}} \frac{W_{\sigma'}}{(2\beta)_{\sigma'}} \sum_{\ell=k+1}^{n} W_{B_{k},B_{\ell}} \sum_{\sigma': B_{\ell} \xrightarrow[]{\cancel{B_{k'},B_{k}}} \delta_{n}} \frac{W_{\sigma'}}{(2\beta)_{\sigma'}}\right)^{s}\right)
\end{aligned}\end{equation*}

Next
\begin{equation*}\begin{aligned}
& \sum_{\sigma':B_{k} \xrightarrow[]{\cancel{B_{k'}}} B_{k}} \frac{W_{\sigma'}}{(2\beta)_{\sigma'}} \sum_{\ell=k+1}^{n} W_{B_{k},B_{\ell}} \sum_{\sigma': B_{\ell} \xrightarrow[]{\cancel{B_{k'},B_{k}}} \delta_{n}} \frac{W_{\sigma'}}{(2\beta)_{\sigma'}} \\
&\leq   \sum_{\sigma':B_{k} \xrightarrow[]{} B_{k}} \frac{W_{\sigma'}}{(2\beta)_{\sigma'}} \sum_{\ell=k+1}^{n} W_{B_{k},B_{\ell}} \sum_{\sigma': B_{\ell} \xrightarrow[]{\cancel{B_{k'},B_{k}}} \delta_{n}} \frac{W_{\sigma'}}{(2\beta)_{\sigma'}}\\
& =G(B_{k},B_{k}) \sum_{\ell=k+1}^{n} W_{B_{k},B_{\ell}} \sum_{\sigma': B_{\ell} \xrightarrow[]{\cancel{B_{k'},B_{k}}} \delta_{n}} \frac{W_{\sigma'}}{(2\beta)_{\sigma'}}
\end{aligned}\end{equation*}
and again \(G(B_{k},B_{k})\) depends only on the diagonal variable at \(B_k\), while the remaining factor is measurable away from \(B_k\). By Lemma~\ref{lem-gamma-dist-Gii}, it is independent of that factor, so
\begin{equation*}\begin{aligned}
&\mathbb{E}_{\widetilde{\Lambda}_{n}^{(k)}}\left( \left(\sum_{\sigma':B_{k} \xrightarrow[]{\cancel{B_{k'}}} B_{k}} \frac{W_{\sigma'}}{(2\beta)_{\sigma'}} \sum_{\ell=k+1}^{n} W_{B_{k},B_{\ell}} \sum_{\sigma': B_{\ell} \xrightarrow[]{\cancel{B_{k'},B_{k}}} \delta_{n}} \frac{W_{\sigma'}}{(2\beta)_{\sigma'}} \right)^{s} \right) \\
& \leq c_s^{s} \sum_{\ell=k+1}^{n} W_{B_{k},B_{\ell}}^{s} \mathbb{E}_{\widetilde{\Lambda}_{n}^{(k)}}\left(  \left(  \sum_{\sigma': B_{\ell} \xrightarrow[]{\cancel{B_{k'},B_{k}}} \delta_{n}} \frac{W_{\sigma'}}{(2\beta)_{\sigma'}}\right)^{s} \right)\\
& \leq c_s^{s} \sum_{\ell=k+1}^{n} W_{B_{k},B_{\ell}}^{s} \mathbb{E}_{\widetilde{\Lambda}_{n}^{(k)}}\left(  \left(  \sum_{\sigma': B_{\ell} \xrightarrow[]{} \delta_{n}} \frac{W_{\sigma'}}{(2\beta)_{\sigma'}}\right)^{s} \right)
\end{aligned}\end{equation*}
After dropping the constraint to avoid \(B_{k'},B_{k}\), the remaining factor is exactly
\begin{equation}\begin{aligned}
\label{eq-Bl-deltan-Geu}
\mathbb{E}_{\widetilde{\Lambda}_{n}^{(k)}}\left( \left(\sum_{\sigma':B_{\ell} \xrightarrow[]{} \delta_{n}} \frac{W_{\sigma'}}{(2 \beta)_{\sigma'}}\right)^{s}
\right) & = \mathbb{E}_{\widetilde{\Lambda}_{n}^{(k)}}\left(G(\delta_{n},B_{\ell})^{s}\right)\\
& = \mathbb{E}_{\widetilde{\Lambda}_{n}^{(k)}}(G(\delta_{n},\delta_{n})^{s}) \, \mathbb{E}_{\widetilde{\Lambda}_{n}^{(k)}}\left(e^{s u_{B_{\ell}}}\right) \\
& = \mathbb{E}_{\widetilde{\Lambda}_{n}^{(k)}}(G(\delta_{n},\delta_{n})^{s}) \, \mathbb{E}_{\widetilde{\Lambda}_{n}^{(\ell)}}\left(e^{s u_{B_{\ell}}}\right).
\end{aligned}\end{equation}
where in the last equality we successively coarse-grain \(B_{k'}\) and \(B_k\), then \(B_{(k+1)'}\), and so on, until the block containing the starting point becomes \(B_{\ell'}\). This is exactly Corollary~\ref{coro-effective-model-u}. To summarize, our discussion of the first sum gives the following upper bound:
\begin{equation}\begin{aligned}
\label{eq-3A}
&\mathbb{E}_{\widetilde{\Lambda}_{n}^{(k)}}\left( \left(\sum_{B_{k'} \Rightarrow B_{k} \to  \delta_{n} } \frac{W_{\sigma}}{(2\beta)_{\sigma}}\right)^{s}\right) \\
&\leq c_s^{{2s}} W_{B_{k'},B_{k}}^{s} \sum_{\ell=k+1}^{n} W_{B_k,B_{\ell}}^{s} \mathbb{E}_{\widetilde{\Lambda}_{n}^{(k)}}(G(\delta_{n},\delta_{n})^{s}) \mathbb{E}_{\widetilde{\Lambda}_{n}^{(\ell)}}\left(e^{{s u_{B_{\ell}}}}\right) 
\end{aligned}\end{equation}

For the second class of paths, we factorize the initial segment up to the last visit to \(B_{k'}\), which contributes the factor \(G(B_{k'},B_{k'})\). Thus
\begin{equation*}\begin{aligned}
\sum_{B_{k'} \Rightarrow B_{>k} \to \delta_{n}} \frac{W_{\sigma}}{(2\beta)_{\sigma}} = G(B_{k'},B_{k'}) \sum_{\ell=k+1}^{n} W_{B_{k'},B_{\ell}} \sum_{\sigma':B_{\ell} \xrightarrow[]{\cancel{B_{k'}}} \delta_{n}}  \frac{W_{\sigma'}}{(2 \beta)_{\sigma'}}
\end{aligned}\end{equation*}
Under the law \(\mathbb{E}_{\widetilde{\Lambda}_{n}^{(k)}}\), \(G(B_{k'},B_{k'})\) is independent of \(\sum_{\sigma':B_{\ell} \xrightarrow[]{\cancel{B_{k'}}} \delta_{n}}  \frac{W_{\sigma'}}{(2 \beta)_{\sigma'}}\)  and it equals in law to \(\frac{1}{2\gamma}\), therefore,
\begin{equation*}\begin{aligned}
&\mathbb{E}_{\widetilde{\Lambda}_{n}^{(k)}}\left(\left(\sum_{B_{k'} \Rightarrow B_{>k} \to \delta_{n}} \frac{W_{\sigma}}{(2\beta)_{\sigma}}\right)^{s}\right) \\
& \le \sum_{\ell=k+1}^{n} W_{B_{k'},B_{\ell}}^{s}  \mathbb{E}_{\widetilde{\Lambda}_{n}^{(k)}}\left(G(B_{k'},B_{k'})^{s}\right) \mathbb{E}_{\widetilde{\Lambda}_{n}^{(k)}}\left( \left(\sum_{\sigma':B_{\ell} \xrightarrow[]{\cancel{B_{k'}}} \delta_{n}}  \frac{W_{\sigma'}}{(2 \beta)_{\sigma'}}\right)^{s}
\right)
\\ &\leq  \sum_{\ell=k+1}^{n} W_{B_{k'},B_{\ell}}^{s}  \mathbb{E}_{\widetilde{\Lambda}_{n}^{(k)}}\left(G(B_{k'},B_{k'})^{s}\right) \mathbb{E}_{\widetilde{\Lambda}_{n}^{(k)}}\left( \left(\sum_{\sigma':B_{\ell} \xrightarrow[]{} \delta_{n}}  \frac{W_{\sigma'}}{(2 \beta)_{\sigma'}}\right)^{s}
\right)\\
& =  \sum_{\ell=k+1}^{n} W_{B_{k'},B_{\ell}}^{s}  c_s^{s}\mathbb{E}_{\widetilde{\Lambda}_{n}^{(k)}}\left( \left(\sum_{\sigma':B_{\ell} \xrightarrow[]{} \delta_{n}}  \frac{W_{\sigma'}}{(2 \beta)_{\sigma'}}\right)^{s}
\right)
\end{aligned}\end{equation*}
The last factor is already discussed in Eq. \eqref{eq-Bl-deltan-Geu}. To summarize, the second sum satisfies the following upper bound:
\begin{equation}\begin{aligned}
\label{eq-4A}
\mathbb{E}_{\widetilde{\Lambda}_{n}^{(k)}}\left(\left(\sum_{B_{k'} \Rightarrow B_{>k} \to \delta_{n}} \frac{W_{\sigma}}{(2\beta)_{\sigma}}\right)^{s}\right) \leq  \sum_{\ell=k+1}^{n} W_{B_{k'},B_{\ell}}^{s}  c_s^{s} \mathbb{E}_{\widetilde{\Lambda}_{n}^{(k)}}(G(\delta_{n},\delta_{n})^{s}) \mathbb{E}_{\widetilde{\Lambda}_{n}^{(\ell)}}\left(e^{{s u_{B_{\ell}}}}\right).
\end{aligned}\end{equation}

Recall that
\[\mathbb{E}_{\widetilde{\Lambda}_{n}^{(k)}}(G(\delta_{n},\delta_{n})^{s}) = c_{s}^{s} \in (0,\infty).\]

Therefore, combining Equations \eqref{eq-A-1}, \eqref{eq-pathsum-2-parts}, \eqref{eq-3A}, \eqref{eq-4A}, and dividing both sides by \(\mathbb{E}_{\widetilde{\Lambda}_{n}^{(k)}}(G(\delta_{n},\delta_{n})^{s})=c_s^s\), we obtain
\begin{equation}\begin{aligned}
\label{eq-recursion-fmm}
& \mathbb{E}_{\widetilde{\Lambda}_{n}^{(k)}}(e^{{s u_{B_{k'}}}})  \\
& \leq  c_s^{{2s}} W_{B_{k'},B_{k}}^{s} \sum_{\ell=k+1}^{n} W_{B_k,B_{\ell}}^{s} \mathbb{E}_{\widetilde{\Lambda}_{n}^{(\ell)}}\left(e^{{s u_{B_{\ell}}}}\right)  \\
& + \sum_{\ell=k+1}^{n} W_{B_{k'},B_{\ell}}^{s}  c_s^{s} \mathbb{E}_{\widetilde{\Lambda}_{n}^{(\ell)}}\left(e^{{s u_{B_{\ell}}}}\right) \\
& = \left(1+ c_s^{s} W_{B_{k'},B_{k}}^{s}\right) c_s^{s} \left(\sum_{\ell=k+1}^{n} W_{B_{k},B_{\ell}}^{s} \mathbb{E}_{\widetilde{\Lambda}_{n}^{(\ell)}}(e^{{s u_{B_{\ell}}}})\right).
\end{aligned}\end{equation}
\end{proof}

\begin{proof}[Proof of Proposition \ref{prop-2-FMMbound-uBk}]
Note that \(\mathbb{E}_{\widetilde{\Lambda}_{n}^{(k)}}(e^{{s u_{B_{k}}}})= \mathbb{E}_{\widetilde{\Lambda}_{n}^{(k)}}(e^{{s u_{B_{k'}}}})\).

We can directly iterate this bound by writing
\begin{equation*}\begin{aligned}
&\mathbb{E}_{\widetilde{\Lambda}_{n}^{(k)}}(e^{{s u_{B_{k}}}})  \\
& \leq  \left(1+ c_s^{s} W_{B_{k'},B_{k}}^{s}\right) c_s^{s} \left(\sum_{k_1=k+1}^{n} W_{B_{k},B_{k_1}}^{s} \mathbb{E}_{\widetilde{\Lambda}_{n}^{(k_1)}}(e^{{s u_{B_{k_1}}}})\right) \\
& \leq  \left(1+ c_s^{s} W_{B_{k'},B_{k}}^{s}\right) c_s^{s} \left(\sum_{k_1=k+1}^{n} W_{B_{k},B_{k_1}}^{s} \left(1+ c_s^{s} W_{B_{k_1'},B_{k_1}}^{s}\right) c_s^{s}\left( \sum_{k_2=k_1+1}^{n} W_{B_{k_1},B_{k_2}}^{s} \mathbb{E}_{\widetilde{\Lambda}_{n}^{(k_2)}} (e^{ {s u_{B_{k_2}}}})\right) \right) \\
& \vdots \\
& \leq   \sum_{k=k_0<k_1<k_2 <\cdots < k_{\ell}=n} \prod_{i=0}^{\ell-1} \left[\left(1+ c_s^{s} W_{B_{k_i'},B_{k_i}}^{s}\right)c_s^{s} W_{B_{k_i},B_{k_{i+1}}}^{s}\right]
\end{aligned}\end{equation*}
Substituting the explicit value of \(W_{B_{k_i},B_{k_{i+1}}}\), we continue our computation
\begin{equation*}\begin{aligned}
& = \sum_{k=k_0<k_1<k_2 <\cdots < k_{\ell}=n} \prod_{i=0}^{\ell-1} \left[\left(1+ \left( c_s2^{2 k_i} \overline{W} (2\rho)^{-k_i-1}\right)^{s}\right) c_s^{s} \left( \frac{2^{k_i+k_{i+1}}\overline{W}}{(2\rho)^{k_{i+1}+1}} (\frac{\rho}{\rho-1}\mathbbm{1}_{i=\ell-1} + \mathbbm{1}_{i<\ell-1})\right)^{s}  \right]\\
& =  \sum_{k=k_0<k_1<k_2 <\cdots < k_{\ell}=n}  \left( \frac{\rho}{\rho-1}\right)^{s} \prod_{i=0}^{\ell-1}\left[ \left(1+   \left(\frac{c_s\overline{W}}{2\rho} \left(\frac{2}{\rho}\right)^{k_i} \right)^{s}\right) \left( \frac{c_s\overline{W}}{2\rho}  \frac{2^{k_i}}{\rho^{k_{i+1}}}\right)^{s}\right]\\
& = \left( \frac{\rho}{\rho-1}\right)^{s}  \sum_{k=k_0<k_1<k_2 <\cdots < k_{\ell}=n} \prod_{i=0}^{\ell-1} \left[ \left(1+   \left(\frac{c_s\overline{W}}{2\rho} \left(\frac{2}{\rho}\right)^{k_i} \right)^{s}\right) \left( \frac{c_s\overline{W}}{2\rho}  \frac{2^{k_i}}{\rho^{k_{i}}}\right)^{s}\right] \prod_{i=0}^{\ell-1} \left(\frac{\rho^{k_i}}{\rho^{k_{i+1}}}\right)^{s} \\
& = \left( \frac{\rho}{\rho-1}\right)^{s} \rho^{-{s (n-k)}} \sum_{k=k_0<k_1<k_2 <\cdots < k_{\ell}=n} \prod_{i=0}^{\ell-1} \left[ \left(1+   \left(\frac{c_s\overline{W}}{2\rho} \left(\frac{2}{\rho}\right)^{k_i} \right)^{s}\right) \left( \frac{c_s\overline{W}}{2\rho}  \frac{2^{k_i}}{\rho^{k_{i}}}\right)^{s}\right] 
\end{aligned}\end{equation*}
Since \(\sum_{k=k_0<k_1<k_2 <\cdots < k_{\ell}=n} \prod_{i=0}^{\ell-1} A_{k_i} \le \prod_{i=k}^{n-1} (1+A_i)\), we have, to summarize for now,
\begin{equation}\begin{aligned}
\label{eq-ELambda-euBk-bound-with-rho}
&\mathbb{E}_{\widetilde{\Lambda}_{n}^{(k)}}(e^{{s u_{B_{k}}}}) \\
& \leq  \left( \frac{\rho}{\rho-1}\right)^{s} \rho^{-{s (n-k)}}  \prod_{i=k}^{n-1} \left[1+ \left(1+   \left(\frac{c_s\overline{W}}{2\rho} \left(\frac{2}{\rho}\right)^{i} \right)^{s}\right) \left( \frac{c_s\overline{W}}{2\rho}  \frac{2^{i}}{\rho^{i}}\right)^{s}\right]
\end{aligned}\end{equation}
When \(d<2\), the product above is bounded uniformly in \(n\), for all \(\overline{W}\):
\begin{equation*}\begin{aligned}
&\prod_{i=k}^{n-1} \left[1+ \left(1+   \left(\frac{c_s\overline{W}}{2\rho} \left(\frac{2}{\rho}\right)^{i} \right)^{s}\right) \left( \frac{c_s\overline{W}}{2\rho}  \frac{2^{i}}{\rho^{i}}\right)^{s}\right] \\
&  \leq \exp \sum_{i=0}^{\infty}  \left(1+   \left(\frac{c_s\overline{W}}{2\rho} \left(\frac{2}{\rho}\right)^{i} \right)^{s}\right) \left( \frac{c_s\overline{W}}{2\rho}  \frac{2^{i}}{\rho^{i}}\right)^{s}\\
& <\infty
\end{aligned}\end{equation*}
For \(d<2\) (that is, \(\rho>2\)), the term \((2/\rho)^s\) is less than 1, ensuring the infinite product converges and is bounded uniformly in \(n\).

Therefore, there exists a constant \(C(\overline{W},d,s)\) such that, if \(d<2\),
\begin{equation}\begin{aligned}
\label{eq-sum-one-path-1plusAj}
\sum_{k=k_0<k_1<k_2 <\cdots < k_{\ell}=n} \prod_{i=0}^{\ell-1} \left[\left(1+ c_s^{s} W_{B_{k_i'},B_{k_i}}^{s}\right)c_s^{s} W_{B_{k_i},B_{k_{i+1}}}^{s}\right] \leq  C(\overline{W},d,s)  \rho^{- {s (n-k)}}
\end{aligned}\end{equation}
As a consequence:
\[\mathbb{E}_{\widetilde{\Lambda}_{n}^{(k)}}(e^{{s u_{B_{k}}}}) \leq C(\overline{W},d,s)  \rho^{- {s (n-k)}}.\]

When \(d=2\), then \(\rho=2\), we choose \(\overline{W}\) small enough such that
\begin{equation}\begin{aligned}
\label{eq-choose-upsilon}
1+ \left(1+   \left(\frac{c_s\overline{W}}{2\rho} \left(\frac{2}{\rho}\right)^{i} \right)^{s}\right) \left( \frac{c_s\overline{W}}{2\rho}  \frac{2^{i}}{\rho^{i}}\right)^{s} = 1+ \left(1+   \left(\frac{c_s\overline{W}}{4} \right)^{s}\right) \left( \frac{c_s\overline{W}}{4}  \right)^{s} < 2^{s}.
\end{aligned}\end{equation}
Therefore, we have
\begin{equation}\begin{aligned}
\label{eq-sum-one-path-1plusAj-d-is-2}
&\sum_{k=k_0<k_1<k_2 <\cdots < k_{\ell}=n} \prod_{i=0}^{\ell-1} \left[\left(1+ c_s^{s} W_{B_{k_i'},B_{k_i}}^{s}\right)c_s^{s} W_{B_{k_i},B_{k_{i+1}}}^{s}\right] \\
&\leq   \left( \frac{\rho}{\rho-1}\right)^{s} \rho^{-{s (n-k)}} \prod_{i=k}^{n-1} \left[1+ \left(1+   \left(\frac{c_s\overline{W}}{2\rho} \left(\frac{2}{\rho}\right)^{i} \right)^{s}\right) \left( \frac{c_s\overline{W}}{2\rho}  \frac{2^{i}}{\rho^{i}}\right)^{s}\right] \\
&\leq    2^{s} 2^{-{s (n-k)}}  \left[1+ \left(1+   \left(\frac{c_s\overline{W}}{4} \right)^{s}\right) \left( \frac{c_s\overline{W}}{4}  \right)^{s}\right]^{n-k}
\end{aligned}\end{equation}
Therefore, the stated estimate holds a fortiori with the smaller constant \(c(\overline{W},s)\) defined in Theorem \ref{thm-FMM-exp-u}.
\end{proof}

\subsection{Estimate for \(G(i,j)\)}
\label{sec:org69b702a}
The proof of the fractional-moment estimate for \(G(i,j)\) follows the same general strategy as the proof of the \(G(i,\delta_n)\) estimate in Section~\ref{sec:orgda94508}. The new feature is that the coarse-grain reduction step is more involved, because the path expansion now has to keep track of two distinguished vertices.

To estimate fractional moments of \(G_{\beta,\widetilde\Lambda_n}(1,j)\) for \(2\le j\le 2^n\), we coarse-grain the graph \(\widetilde\Lambda_n\) as much as possible while keeping the vertices \(1\) and \(j\) separate. For instance, in Figure~\ref{fig:tikzpic5} we merge every dyadic block that contains neither \(1\) nor \(j=9\).
\begin{figure}[ht]
  \centering
  \begin{tikzpicture}[
      level distance=1cm,
      level 1/.style={sibling distance=20em},
      level 2/.style={sibling distance=10em},
      level 3/.style={sibling distance=5em},
      level 4/.style={sibling distance=2.5em},
      level 5/.style={sibling distance=1.25em},
      every node/.style={circle, draw, minimum size=5mm, inner sep=0pt}
    ]
    \node {\(B_{5'}\)}
    child { node {\(B_{4'}\)}
      child { node {\(B_{3'}\)}
        child { node {\(B_{2'}\)}
          child { node {\(B_{1'}\)}
            child { node {1} }
            child { node {2} }
          }
          child { node {\(B_{1}\)}
          }
        }
        child { node {\(B_{2}\)}
        }
      }
      child { node {\(B_{3}\)}
        child { node {\(C_{2'}\)}
          child { node {\(C_{1'}\)}
            child { node {9} }
            child { node {10} }
          }
          child { node {\(C_1\)}
          }
        }
        child { node {\(C_2\)}
        }
      }
    }
    child { node {\(B_{4}\)}      
    };

    \node at (6,0) {\(\delta_5\)};
  \end{tikzpicture}
\caption{The graph \(\widetilde{\Lambda}_{5,\{1,9\}}^{(0)}\) obtained after coarse-graining the box \(\widetilde{\Lambda}_5\) in order to compute \(G_{\widetilde{\Lambda}_5}(1,9)\). Here \(\delta_5\) is also \(B_5\).}
  \label{fig:tikzpic5}
\end{figure}

The blocks \(B_k\) and \(B_{k'}\) are defined as in \eqref{eq-BkBkprimeetc}. To describe the corresponding blocks on the branch containing \(j\), assume without loss of generality that \(j=2^m+1\); see Figure~\ref{fig:tikzpic5}. Inside the subtree rooted at the right child of the youngest common ancestor of \(1\) and \(j\), we define
\[\begin{aligned}
&C_{1'} = \{2^{m}+1,2^{m}+2\}=2^{m}+B_{1'}\\
&C_{1} = \{2^{m}+3,2^{m}+4\}=2^{m}+B_{1}\\
&C_{2} = 2^{m}+B_{2}\\
&\vdots\\
&C_{m-1}=2^{m}+B_{m-1}.
\end{aligned}\]
Given \(1\) and \(j\), we define a sequence of increasingly coarse graphs \(\widetilde{\Lambda}_{n,\{1,j\}}^{(0)},\widetilde{\Lambda}_{n,\{1,j\}}^{(1)},\dots,\widetilde{\Lambda}_{n,\{1,j\}}^{(m)}\). The first graph \(\widetilde{\Lambda}_{n,\{1,j\}}^{(0)}\) is obtained by coarse-graining all dyadic blocks \(B_1,\dots,B_{m-1},C_1,\dots,C_{m-1}\) together with the blocks \(B_{m+1},\dots,B_{n-1}\); see Figure~\ref{fig:tikzpic5} for an example. Thus \(\widetilde{\Lambda}_{n,\{1,j\}}^{(0)}\) has vertex set \(\{B_{0'},B_{0},B_1,\dots,B_{m-1},B_{m+1},\dots,B_n,C_0,C_{0'},C_1,\dots,C_{m-1}\}\), with the convention \(B_n=\delta_n\), where \(C_{0'}=\{j\}\) and \(C_0=\{j+1\}\). In general, the vertex set of \(\widetilde{\Lambda}_{n,\{1,j\}}^{(k)}\) is
\begin{equation}\begin{aligned}
\label{eq-vertex-set-Lambdan1jk}
 \{B_{0'},B_{0},B_1 ,\cdots,B_{m-1},B_{m+1},\cdots,B_{n}, C_{k'},C_k,C_{k+1} ,\cdots, C_{m-1}\}. 
\end{aligned}\end{equation}

By Eq. \eqref{eq-block-W-general} we can compute the effective \(W\) between the \(B\)s and the \(C\)s: given \(0\leq k\leq m-1\), for \(\ell\geq k+1\) and \(i\in \{0 ,\cdots,m-1,m+1 ,\cdots,n\}\), with \(B_n=\delta_n\),
\begin{equation}\begin{aligned}
\label{eq-block-W-CkClBi}
&W_{C_{k},C_{\ell}}= W_{C_{k'},C_{\ell}}=W_{B_{k'},B_{\ell}}= W_{B_{k},B_{\ell}}=2^{k+\ell} \overline{W} (2\rho)^{-(k\vee \ell)-1}\\
& W_{C_{k'},B_{i}}=W_{C_k,B_i}=\begin{cases}
2^{k+i} \overline{W}(2\rho)^{-m-1}, & 0\le i\le m-1,\\
2^{k+i}\overline{W} (2\rho)^{-(k\vee i)-1}, & m+1\le i\le n-1,\\
2^{k} \dfrac{\overline{W}\rho^{-n}}{2(\rho-1)}, & i=n,
\end{cases}\\
& W_{C_{k'},C_{k}}= W_{B_{k'},B_{k}}= 2^{2k} \overline{W} (2\rho)^{-k-1}.
\end{aligned}\end{equation}

We first record the auxiliary estimate needed below.
\begin{proposition}
On \(\widetilde{\Lambda}_{n}^{(0)}\), for \(0\leq \ell\leq n\), we have the following estimate for the fractional moments of the effective field:
\begin{enumerate}
\item If \(d<2\), then there is a constant \(C''(\overline{W},d,s) > 0\) for all \(0\leq \ell\leq n\),
\[\mathbb{E}_{\widetilde{\Lambda}_{n}^{(0)}}(e^{{s u^{[1]}_{B_{\ell}}}}) \leq  C''(\overline{W},d,s) \rho^{-{\ell s}}.\]
\item If \(d=2\), then there is a constant \(C''(\overline{W},s)>0\) such that for all \(0\leq \ell\leq n\),
\[\mathbb{E}_{\widetilde{\Lambda}_{n}^{(0)}}(e^{{s u^{[1]}_{B_{\ell}}}}) \leq  C''(\overline{W},s) c(\overline{W},s)^{-{\ell s}}\]
where \(c(\overline{W},s)\) is the same as in Theorem \ref{thm-FMM-exp-u}.
\end{enumerate}
\label{prop-G-1-B-m}
\end{proposition}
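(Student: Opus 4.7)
The plan is to adapt the proof of Proposition~\ref{prop-2-FMMbound-uBk} with the pinning point moved from $\delta_n$ to $1 = B_{0'}$, aiming for decay in $\ell$ (the hierarchical depth of $B_\ell$ viewed from $1$) rather than in $n-k$. First, by Lemma~\ref{lem-gamma-dist-Gii}, $G(1,1)$ is independent of $\{u_j^{(1)}\}$ with $\mathbb{E}(G(1,1)^s) = c_s^s$, so
\[\mathbb{E}_{\widetilde{\Lambda}_n^{(0)}}(e^{s u^{(1)}_{B_\ell}}) = c_s^{-s}\,\mathbb{E}_{\widetilde{\Lambda}_n^{(0)}}(G(1, B_\ell)^s),\]
reducing the statement to a bound on a fractional moment of the off-diagonal Green's function.

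Next I would expand $G(1, B_\ell) = \sum_{\sigma : 1 \to B_\ell} W_\sigma/(2\beta)_\sigma$ via Proposition~6 of \autocite{Sabot2019} and apply the ``last-visit'' path split from the proof of Proposition~\ref{prop-1-recursiveBdd-uBk}: partition walks from $1$ to $B_\ell$ by the block $B_{k_1}$ entered immediately after the last visit to $1$, factor out the independent diagonal $G(1,1)$, and recursively cut the residual walk at its last visit to $B_{k_1}$, and so on. Each cut contributes an independent diagonal factor $G(B_{k_i}, B_{k_i})$ (by Lemma~\ref{lem-gamma-dist-Gii}) once we coarse-grain $\{1, B_0, \ldots, B_{k_i-1}\}$ into the single vertex $B_{k_i'}$ via Corollary~\ref{coro-effective-model-u}, placing the residual walk on $\widetilde{\Lambda}_n^{(k_i)}$. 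Applying \eqref{eq-FMM-power-bound} freely yields the analog of the recursion \eqref{eq-recursion-fmm}, unfolding over strictly increasing chains $0 = k_0 < k_1 < \cdots < k_r = \ell$ terminating at the target block.

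Plugging in the explicit edge weights \eqref{eq-block-W-BjBk} and summing the chain factors via the same telescoping identity used in the proof of Proposition~\ref{prop-2-FMMbound-uBk}, we obtain an expression of the form appearing in \eqref{eq-ELambda-euBk-bound-with-rho}. The bounded infinite product prefactor absorbs into a constant $C''(\upsilon, d, s)$, while the remaining factor is $\rho^{-\ell s}$ for $d < 2$, respectively $c(\upsilon, s)^{-\ell s}$ for $d = 2$ under condition \eqref{eq-choose-upsilon}, yielding precisely the claimed estimate.

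The main obstacle is combinatorial: in Proposition~\ref{prop-2-FMMbound-uBk} the recursion unfolds outward toward the boundary vertex $\delta_n$, so chains of last-visits are monotone by construction. Here the target $B_\ell$ is interior, and walks may overshoot by first visiting some $B_m$ with $m > \ell$ before returning to $B_\ell$. I would treat such overshoots by further cutting the walk at its outermost visited block $B_m$, invoking Proposition~\ref{prop-2-FMMbound-uBk} to control the outbound leg and using \eqref{eq-euij-euji-are-equal} together with Corollary~\ref{coro-effective-model-u} to control the return leg. The resulting sum over $m > \ell$ forms a convergent geometric series (for $d < 2$, or $d = 2$ with $\upsilon$ small), which is absorbed into $C''(\upsilon, d, s)$.
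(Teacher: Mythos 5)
Your overall architecture matches the paper's: reduce to a fractional moment of \(G(1,B_{\ell})\) via Lemma \ref{lem-gamma-dist-Gii}, expand in path sums, peel off independent diagonal factors, coarse-grain via Corollary \ref{coro-effective-model-u}, and bound the resulting increasing chains by the telescoping estimate \eqref{eq-sum-one-path-1plusAj}. The genuine gap sits exactly in the step you flag as the main obstacle, the overshooting paths. Cutting the walk once at its outermost visited block \(B_{m}\) and estimating the outbound and return legs separately presupposes that the fractional moment factorizes over the two legs; both legs live in the same environment \(\beta\), and no independence is available for such a global cut. The paper never performs this cut: it derives the symmetric recursion \eqref{eq-recursion-fmm-Gij} for \(\mathbb{E}_{\widetilde{\Lambda}_{n}^{(k)}}(e^{s u^{(B_j)}_{B_{k'}}})\), in which one elementary step peels off exactly one independent diagonal Green's function (Lemma \ref{lem-gamma-dist-Gii}) and coarse-grains only as far as the pinning endpoint allows: a jump to \(B_{\ell}\) with \(\ell\leq j\) passes to \(\widetilde{\Lambda}_{n}^{(\ell)}\), while a jump past \(B_{j}\) stays on \(\widetilde{\Lambda}_{n}^{(j)}\) and, via \eqref{eq-euij-euji-are-equal}, the expansion switches to the other endpoint. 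Iterating this alternating two-endpoint scheme produces two strictly increasing chains meeting at some \(m\geq j\), and only then is everything bounded deterministically by \eqref{eq-sum-one-path-1plusAj} applied to each chain and summed over \(m\); your ``geometric series over \(m\)'' is precisely this final sum \(\sum_{m\geq \ell} C^{2}\rho^{-s m}\rho^{-s(m-\ell)}\), but it is the outcome of that recursion, not something one can impose by a one-shot decomposition of the walk.

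A second, smaller mismatch: Proposition \ref{prop-2-FMMbound-uBk} cannot be invoked for the outbound leg, since it bounds \(\mathbb{E}_{\widetilde{\Lambda}_{n}^{(k)}}(e^{s u_{B_{k}}})\) for the field pinned at \(\delta_{n}\), i.e.\ decay in \(n-k\) toward the boundary vertex, whereas your leg ends at an interior block \(B_{m}\). What is actually needed (and what the paper uses) is the chain-sum bound \eqref{eq-sum-one-path-1plusAj} with the endpoint \(n\) replaced by \(m\), which comes from the proof of that proposition rather than its statement. Likewise the return leg from \(B_{m}\) to \(B_{\ell}\) is an interior-to-interior Green's function of exactly the type Proposition \ref{prop-G-1-B-m} is trying to control, so ``controlling it by \eqref{eq-euij-euji-are-equal} and Corollary \ref{coro-effective-model-u}'' is circular unless it is organized as the well-founded alternating recursion above, whose termination is guaranteed because the positions strictly increase until the two chains meet.
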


\begin{proof}[Proof of Theorem \ref{thm-FMM-exp-uij}]
By Lemma~\ref{lem-effective-h22}, and recalling that \(j=2^m+1\),
\[\mathbb{E}_{\widetilde{\Lambda}_{n}}(e^{{s u^{[j]}_{1}}}) = \mathbb{E}_{\widetilde{\Lambda}_{n,\{1,j\}}^{(0)}}(e^{{s u^{[j]}_{1}}}).\]
Now fix \(k\in\{0,\dots,m-1\}\). Recall that \(C_{k'}=2^m+B_{k'}\). Using the vertex description in \eqref{eq-vertex-set-Lambdan1jk}, we obtain
\[\begin{aligned}
\mathbb{E}_{\widetilde{\Lambda}_{n,\{1,j\}}^{(k)}}(G(1,1)^{s})\mathbb{E}_{\widetilde{\Lambda}_{n,\{1,j\}}^{(k)}}(e^{{s u^{[1]}_{C_{k'}}}}) & = \mathbb{E}_{\widetilde{\Lambda}_{n,\{1,j\}}^{(k)}}(G(1,C_{k'})^{s})\\
& = \mathbb{E}_{\widetilde{\Lambda}_{n,\{1,j\}}^{(k)}} \left(\left(\sum_{\sigma:C_{k'}\to 1} \left(\frac{W_{\sigma}}{(2\beta)_{\sigma}}\right)\right)^{s}\right).
\end{aligned}\]
We first split the path at the last visit to \(C_{k'}\). After that last visit, the path either jumps to some \(B\)-block, or to one of the blocks \(C_{k+1},\dots,C_{m-1}\), or to \(C_k\). Accordingly,
\[\begin{aligned}
\sum_{\sigma:C_{k'}\to 1} \frac{W_{\sigma}}{(2\beta)_{\sigma}} & = G(C_{k'},C_{k'}) \sum_{i\in \{0 ,\cdots,m-1,m+1 ,\cdots,n\}}W_{C_{k'},B_{i}} \sum_{\sigma': B_i \xrightarrow[]{\cancel{C_{k'}}}1} \frac{W_{\sigma'}}{(2\beta)_{\sigma'}} \\
&+ G(C_{k'},C_{k'})\sum_{\ell=k+1}^{m-1} W_{C_{k'},C_{\ell}} \sum_{\sigma':C_{\ell} \xrightarrow[]{\cancel{C_{k'}} }1} \frac{W_{\sigma'}}{(2\beta)_{\sigma'}}\\
&+ G(C_{k'},C_{k'}) W_{C_{k'},C_{k}} \sum_{\sigma':C_{k}  \xrightarrow[]{\cancel{C_{k'}} }1} \frac{W_{\sigma'}}{(2\beta)_{\sigma'}}
\end{aligned}\]
Now
\[\begin{aligned}
& \mathbb{E}_{\widetilde{\Lambda}_{n,\{1,j\}}^{(k)}} \left(\left(\sum_{\sigma:C_{k'}\to 1} \left(\frac{W_{\sigma}}{(2\beta)_{\sigma}}\right)\right)^{s}\right) \\
& \leq  c_s^{s}   \sum_{i\in \{0 ,\cdots,m-1,m+1 ,\cdots,n\}}W_{C_{k'},B_{i}}^{s} \mathbb{E}_{\widetilde{\Lambda}_{n,\{1,j\}}^{(k)}}\left(\left(\sum_{\sigma': B_i \xrightarrow[]{\cancel{C_{k'}}}1} \frac{W_{\sigma'}}{(2\beta)_{\sigma'}}\right)^{s}\right) \\
&+ c_s^{s}  \sum_{\ell=k+1}^{m-1} W_{C_{k'},C_{\ell}}^{s} \mathbb{E}_{\widetilde{\Lambda}_{n,\{1,j\}}^{(k)}}\left(\left(\sum_{\sigma':C_{\ell} \xrightarrow[]{\cancel{C_{k'}} }1} \frac{W_{\sigma'}}{(2\beta)_{\sigma'}}\right)^{s}\right)\\
&+ c_s^{s}  W_{C_{k'},C_{k}}^{s} \mathbb{E}_{\widetilde{\Lambda}_{n,\{1,j\}}^{(k)}}\left(\left( \sum_{\sigma':C_{k}  \xrightarrow[]{\cancel{C_{k'}} }1} \frac{W_{\sigma'}}{(2\beta)_{\sigma'}}\right)^{s}\right)
\end{aligned}\]
The first term is controlled by Proposition~\ref{prop-G-1-B-m}. For the last term, we perform one further decomposition through the block \(C_k\). This gives the bound
\[\begin{aligned}
  &\sum_{\sigma':C_{k}  \xrightarrow[]{\cancel{C_{k'}} }1} \frac{W_{\sigma'}}{(2\beta)_{\sigma'}} \le \left(\sum_{\sigma':C_{k} \xrightarrow[]{\cancel{C_{k'}}} C_{k} } \frac{W_{\sigma'}}{(2\beta)_{\sigma'}}\right)\\
  & \times
\left( \sum_{i\in \{0 ,\cdots,m-1,m+1 ,\cdots,n\}}W_{C_{k},B_{i}} \sum_{\sigma': B_i \xrightarrow[]{\cancel{C_{k'},C_{k}}}1} \frac{W_{\sigma'}}{(2\beta)_{\sigma'}} + \sum_{\ell=k+1}^{m-1} W_{C_{k},C_{\ell}} \sum_{\sigma':C_{\ell} \xrightarrow[]{\cancel{C_{k'},C_{k}} }1} \frac{W_{\sigma'}}{(2\beta)_{\sigma'}}\right).
\end{aligned}\]
Since \(\sum_{\sigma':C_{k} \xrightarrow[]{\cancel{C_{k'}}} C_{k} } \frac{W_{\sigma'}}{(2\beta)_{\sigma'}} \leq G(C_{k},C_{k})\) on \(\widetilde{\Lambda}_{n,\{1,j\}}^{(k)}\), we have
\[\begin{aligned}
 &\mathbb{E}_{\widetilde{\Lambda}_{n,\{1,j\}}^{(k)}}\left(\left( \sum_{\sigma':C_{k}  \xrightarrow[]{\cancel{C_{k'}} }1} \frac{W_{\sigma'}}{(2\beta)_{\sigma'}}\right)^{s}\right)\\
 & \leq  c_s^{s}   \sum_{i\in \{0 ,\cdots,m-1,m+1 ,\cdots,n\}}W_{C_{k},B_{i}}^{s} \mathbb{E}_{\widetilde{\Lambda}_{n,\{1,j\}}^{(k)}}\left(\left(\sum_{\sigma': B_i \xrightarrow[]{\cancel{C_{k'},C_{k}}}1} \frac{W_{\sigma'}}{(2\beta)_{\sigma'}}\right)^{s}\right) \\
&+ c_s^{s}  \sum_{\ell=k+1}^{m-1} W_{C_{k},C_{\ell}}^{s} \mathbb{E}_{\widetilde{\Lambda}_{n,\{1,j\}}^{(k)}}\left(\left(\sum_{\sigma':C_{\ell} \xrightarrow[]{\cancel{C_{k'},C_{k}} }1} \frac{W_{\sigma'}}{(2\beta)_{\sigma'}}\right)^{s}\right)\\
\end{aligned}\]
Overall, we thus have
\[\begin{aligned}
& \mathbb{E}_{\widetilde{\Lambda}_{n,\{1,j\}}^{(k)}} \left(\left(\sum_{\sigma:C_{k'}\to 1} \left(\frac{W_{\sigma}}{(2\beta)_{\sigma}}\right)\right)^{s}\right) \\
& \leq  c_s^{s} (1+ c_s^{s}  W_{C_{k'},C_{k}}^{s})  \sum_{i\in \{0 ,\cdots,m-1,m+1 ,\cdots,n\}}W_{C_{k'},B_{i}}^{s} \mathbb{E}_{\widetilde{\Lambda}_{n,\{1,j\}}^{(k)}}\left(\left(\sum_{\sigma': B_i \xrightarrow[]{\cancel{C_{k'}}}1} \frac{W_{\sigma'}}{(2\beta)_{\sigma'}}\right)^{s}\right) \\
&+ c_s^{s} (1+ c_s^{s}  W_{C_{k'},C_{k}}^{s}) \sum_{\ell=k+1}^{m-1} W_{C_{k'},C_{\ell}}^{s} \mathbb{E}_{\widetilde{\Lambda}_{n,\{1,j\}}^{(k)}}\left(\left(\sum_{\sigma':C_{\ell} \xrightarrow[]{\cancel{C_{k'}} }1} \frac{W_{\sigma'}}{(2\beta)_{\sigma'}}\right)^{s}\right)
\end{aligned}.\]
We now bound the first family of path sums on the right-hand side by the Green's function \(G(B_i,1)\). Since \(G(B_i,1)=G(1,1)e^{u_{B_i}^{(1)}}\), and \(G(1,1)\) is independent of \(e^{u_{B_i}^{(1)}}\), we can factor out the term \(G(1,1)^s\) inside the expectation. The remaining expectation of \(e^{s u_{B_i}^{(1)}}\) can then be transferred to the corresponding effective model using Lemma~\ref{lem-effective-h22}. More precisely,
\[\begin{aligned}
\mathbb{E}_{\widetilde{\Lambda}_{n,\{1,j\}}^{(k)}}\left(\left(\sum_{\sigma': B_i \xrightarrow[]{\cancel{C_{k'}}}1} \frac{W_{\sigma'}}{(2\beta)_{\sigma'}}\right)^{s}\right) & \leq \mathbb{E}_{\widetilde{\Lambda}_{n,\{1,j\}}^{(k)}} \left(G(B_i,1)^{s}\right)\\
& = \mathbb{E}_{\widetilde{\Lambda}_{n,\{1,j\}}^{(k)}}(G(1,1)^{s}) \mathbb{E}_{\widetilde{\Lambda}_{n,\{1,j\}}^{(k)}}\left(\left(e^{u^{[1]}_{B_i}}\right)^{s}\right)\\
& = \mathbb{E}_{\widetilde{\Lambda}_{n,\{1,j\}}^{(k)}}(G(1,1)^{s}) \mathbb{E}_{\widetilde{\Lambda}_{n}^{(0)}}(e^{{s u^{[1]}_{B_i}}}).
\end{aligned}\]
Similarly we can treat the second path sum, therefore,
\[\begin{aligned}
& \mathbb{E}_{\widetilde{\Lambda}_{n,\{1,j\}}^{(k)}} \left(\left(\sum_{\sigma:C_{k'}\to 1} \left(\frac{W_{\sigma}}{(2\beta)_{\sigma}}\right)\right)^{s}\right) \\
& \leq  c_s^{s} (1+ c_s^{s}  W_{C_{k'},C_{k}}^{s})  \sum_{i\in \{0 ,\cdots,m-1,m+1 ,\cdots,n\}}W_{C_{k'},B_{i}}^{s} \mathbb{E}_{\widetilde{\Lambda}_{n,\{1,j\}}^{(k)}}(G(1,1)^{s}) \mathbb{E}_{\widetilde{\Lambda}_{n}^{(0)}}(e^{{s u^{[1]}_{B_i}}}) \\
&+ c_s^{s} (1+ c_s^{s}  W_{C_{k'},C_{k}}^{s}) \sum_{\ell=k+1}^{m-1} W_{C_{k'},C_{\ell}}^{s} \mathbb{E}_{\widetilde{\Lambda}_{n,\{1,j\}}^{(k)}}\left(G(1,1)^{s}\right)\mathbb{E}_{\widetilde{\Lambda}_{n,\{1,j\}}^{(\ell)}}(e^{{s u^{[1]}_{C_{\ell}}}}).
\end{aligned}\]
Overall we have the following recursive inequality:
 \begin{equation}\begin{aligned}
\label{eq-recursive-inequality-Gij}
& \mathbb{E}_{\widetilde{\Lambda}_{n,\{1,j\}}^{(k)}}(e^{{s u^{[1]}_{C_{k'}}}}) \\
& \leq  c_s^{s} (1+ c_s^{s}  W_{C_{k'},C_{k}}^{s})  \sum_{i\in \{0 ,\cdots,m-1,m+1 ,\cdots,n\}}W_{C_{k'},B_{i}}^{s}  \mathbb{E}_{\widetilde{\Lambda}_{n}^{(0)}}(e^{{s u^{[1]}_{B_i}}}) \\
&+ c_s^{s} (1+ c_s^{s}  W_{C_{k'},C_{k}}^{s}) \sum_{\ell=k+1}^{m-1} W_{C_{k'},C_{\ell}}^{s} \mathbb{E}_{\widetilde{\Lambda}_{n,\{1,j\}}^{(\ell)}}(e^{{s u^{[1]}_{C_{\ell}}}}).
\end{aligned}\end{equation}
To compare \eqref{eq-recursive-inequality-Gij} with the one-point recursion \eqref{eq-recursion-fmm}, note from \eqref{eq-block-W-CkClBi} that
\[
W_{C_{k'},C_k}=W_{B_{k'},B_k}, \qquad W_{C_{k'},C_\ell}=W_{B_{k'},B_\ell}=W_{B_k,B_\ell} \quad (\ell\ge k+1).
\]
The contribution of the \(B_i\)-terms plays the role of the terminal contribution in the one-point recursion, and we collect it into a single remainder by setting
\[
\mathfrak C\,W_{B_k,B_m}^s:=\sum_{i\in \{0,\dots,m-1,m+1,\dots,n\}}W_{C_{k'},B_i}^s\,\mathbb E_{\widetilde\Lambda_n^{(0)}}\bigl(e^{s u_{B_i}^{[1]}}\bigr).
\]
With this notation, \eqref{eq-recursive-inequality-Gij} becomes
\[\begin{aligned}
& \mathbb{E}_{\widetilde{\Lambda}_{n,\{1,j\}}^{(k)}}(e^{{s u^{[1]}_{C_{k'}}}}) \\
& \leq c_s^{s}(1+c_s^{s}W_{B_{k'},B_k}^{s})\left[\sum_{\ell=k+1}^{m-1}W_{B_{k'},B_\ell}^{s}\,\mathbb E_{\widetilde\Lambda_{n,\{1,j\}}^{(\ell)}}\bigl(e^{s u_{C_\ell}^{[1]}}\bigr)+W_{B_k,B_m}^{s}\,\mathfrak C\right].
\end{aligned}\]
By Proposition~\ref{prop-G-1-B-m}, the quantity \(\mathfrak C\) is bounded uniformly in \(n\). We may therefore iterate this inequality exactly as in the proof of Proposition~\ref{prop-2-FMMbound-uBk}: each step replaces \(k\) by a larger index until the chain reaches \(m\). This gives
\begin{equation}\begin{aligned}
\label{eq-tobeinserted}
\mathbb{E}_{\widetilde{\Lambda}_{n,\{1,j\}}^{(k)}}(e^{{s u^{[1]}_{C_{k'}}}}) \leq \mathfrak{C} \sum_{k=k_0 < k_1<k_2 <\cdots <k_{\ell}=m } \prod_{i=0}^{\ell-1}  \left[\left(1+ c_s^{s} W_{B_{k_i'},B_{k_i}}^{s}\right)c_s^{s} W_{B_{k_i},B_{k_{i+1}}}^{s}\right]
\end{aligned}\end{equation}
Assume for the moment that \(d<2\). Splitting the definition of \(\mathfrak C\) into the three ranges \(i<m\), \(m<i<n\), and \(i=n\), and using Proposition~\ref{prop-G-1-B-m} together with \eqref{eq-block-W-CkClBi}, we obtain
\[\begin{aligned}
\mathfrak C
&\le C''(\overline W,d,s)\Bigg[
2^{-ms}\sum_{i=0}^{m-1}\left(\frac{2}{\rho}\right)^{is}
+\rho^{-ms}\sum_{r=1}^{n-m-1}\rho^{-2sr}
+(\rho-1)^{-s}\rho^{-s(2n-m-1)}
\Bigg].
\end{aligned}\]
In particular, \(\mathfrak C\) is bounded uniformly in \(n\).

Inserting this into \eqref{eq-tobeinserted} and then using \eqref{eq-sum-one-path-1plusAj}, we obtain
\[\begin{aligned}
&\mathbb{E}_{\widetilde{\Lambda}_{n,\{1,j\}}^{(k)}}(e^{{s u^{[1]}_{C_{k'}}}})\leq \mathfrak{C} C(\overline{W},d,s) \rho^{-{s (m-k)}}  \leq  C(\overline{W},d,s)(2\rho)^{-{m s}}\rho^{s k}.
\end{aligned}\]
Setting \(k=0\) gives \eqref{eq-FMM-eu-subcritical-ij}.

When \(d=2\), the same split gives
\[\begin{aligned}
\mathfrak C
&\le C''(\overline W,s)\Bigg[
2^{-ms}\sum_{i=0}^{m-1}\left(\frac{2}{c(\overline W,s)}\right)^{is}
+c(\overline W,s)^{-ms}\sum_{r=1}^{n-m-1}\left(\frac{1}{2c(\overline W,s)}\right)^{sr}
+(2c(\overline W,s))^{-ns}
\Bigg].
\end{aligned}\]
So again \(\mathfrak C\) is bounded uniformly in \(n\). We then choose \(\overline{W}\) small enough so that \eqref{eq-choose-upsilon} holds, and the same summation argument yields the desired geometric decay across hierarchical scales.
\end{proof}

\begin{proof}[Proof of Proposition \ref{prop-G-1-B-m}]
The case \(\ell=n\) was already proved in Theorem~\ref{thm-FMM-exp-u}, since \(B_{\ell}=B_n=\delta_n\) (also we use \eqref{eq-euij-euji-are-equal}), the case \(\ell=0,1\) follows the Ward identity \eqref{eq-ward-identity-eu=1} and Jensen inequality. We therefore restrict attention to \(2\le \ell\le n-1\). Fix \(0\le k<j\le n\) and consider the coarse graph \(\widetilde{\Lambda}_n^{(k)}\). Then
\begin{equation}\begin{aligned}
\label{eq-A-1-Gij}
  \mathbb{E}_{\widetilde{\Lambda}_{n}^{(k)}}(G(B_j,B_j)^{s}) \mathbb{E}_{\widetilde{\Lambda}_{n}^{(k)}}(e^{{s u^{[B_{j}]}_{B_{k'}}}}) & = \mathbb{E}_{\widetilde{\Lambda}_{n}^{(k)}}(G(B_{k'},B_j)^{s})\\
& = \mathbb{E}_{\widetilde{\Lambda}_{n}^{(k)}}\left(\left(\sum_{\sigma:B_{k'}\to B_j} \left( \frac{W_{\sigma}}{(2 \beta)_{\sigma}}\right)\right)^{s}\right)
\end{aligned}\end{equation}
where the sum is over the set \(B_{k'} \to B_j\) of all paths \(\sigma\) going from \(B_{k'}\) to \(B_j\) in the graph \(\widetilde{\Lambda}_{n}^{(k)}\).

We start by proving a counterpart of Proposition \ref{prop-1-recursiveBdd-uBk} in our \(G(i,j)\) setting instead of the previous simpler \(G(i,\delta)\) setting.

We split the path sum according to the first jump after the last visit to \(B_{k'}\). Either the path next jumps to \(B_k\), in which case it belongs to \(B_{k'}\Rightarrow B_k\to B_j\), or it jumps directly to some block \(B_\ell\) with \(\ell>k\), in which case it belongs to \(B_{k'}\Rightarrow B_{>k}\to B_j\). Thus
\begin{equation}\begin{aligned}
\label{eq-pathsum-2-parts-Gij}
\sum_{\sigma:B_{k'}\to B_j} \left( \frac{W_{\sigma}}{(2 \beta)_{\sigma}}\right)= \sum_{\sigma:B_{k'}\Rightarrow B_{k}\to B_j} \left( \frac{W_{\sigma}}{(2 \beta)_{\sigma}}\right)+ \sum_{\sigma:B_{k'}\Rightarrow B_{>k}\to B_j} \left( \frac{W_{\sigma}}{(2 \beta)_{\sigma}}\right).
\end{aligned}\end{equation}
For the first sum (over the set \(B_{k'} \Rightarrow B_{k} \to B_j\)) in \eqref{eq-pathsum-2-parts-Gij}, as before, we cut the path into 3 parts, then we have

\begin{equation*}\begin{aligned}
&\sum_{B_{k'} \Rightarrow B_{k} \to  B_j } \frac{W_{\sigma}}{(2\beta)_{\sigma}}\\
& = G(B_{k'},B_{k'}) W_{B_{k'},B_{k}} \sum_{\sigma':B_{k} \xrightarrow[]{\cancel{B_{k'}}} B_{k}} \frac{W_{\sigma'}}{(2\beta)_{\sigma'}} \sum_{\ell=k+1}^{n} W_{B_{k},B_{\ell}} \sum_{\sigma': B_{\ell} \xrightarrow[]{\cancel{B_{k'},B_{k}}} B_j} \frac{W_{\sigma'}}{(2\beta)_{\sigma'}}
\end{aligned}\end{equation*}
where \(\sigma':B_{k} \xrightarrow[]{\cancel{B_{k'}}} B_{k}\) means the set of paths going from \(B_{k}\) to \(B_{k}\) without using the vertex \(B_{k'}\); similarly, \(B_{\ell} \xrightarrow[]{\cancel{B_{k'},B_{k}}} B_j\) denotes the set of paths going from \(B_{\ell}\) to \(B_j\) without visiting \(\{B_{k'},B_{k}\}\).

Under the law \(\mathbb{E}_{\widetilde{\Lambda}_{n}^{(k)}}\), the factor \(G(B_{k'},B_{k'})\) depends only on the diagonal variable at \(B_{k'}\), whereas the remaining path sums are measurable with respect to the field away from \(B_{k'}\) after its last visit. By Lemma~\ref{lem-gamma-dist-Gii}, \(G(B_{k'},B_{k'})\) is independent of the remaining factor. Hence
\begin{equation*}\begin{aligned}
&\mathbb{E}_{\widetilde{\Lambda}_{n}^{(k)}}\left( \left(\sum_{B_{k'} \Rightarrow B_{k} \to  B_j } \frac{W_{\sigma}}{(2\beta)_{\sigma}}\right)^{s}\right) \\
& = c_s^{s} \mathbb{E}_{\widetilde{\Lambda}_{n}^{(k)}}\left( \left(W_{B_{k'},B_{k}} \sum_{\sigma':B_{k} \xrightarrow[]{\cancel{B_{k'}}} B_{k}} \frac{W_{\sigma'}}{(2\beta)_{\sigma'}} \sum_{\ell=k+1}^{n} W_{B_{k},B_{\ell}} \sum_{\sigma': B_{\ell} \xrightarrow[]{\cancel{B_{k'},B_{k}}} B_j} \frac{W_{\sigma'}}{(2\beta)_{\sigma'}}\right)^{s}\right)
\end{aligned}\end{equation*}

Next
\begin{equation*}\begin{aligned}
& \sum_{\sigma':B_{k} \xrightarrow[]{\cancel{B_{k'}}} B_{k}} \frac{W_{\sigma'}}{(2\beta)_{\sigma'}} \sum_{\ell=k+1}^{n} W_{B_{k},B_{\ell}} \sum_{\sigma': B_{\ell} \xrightarrow[]{\cancel{B_{k'},B_{k}}} B_j} \frac{W_{\sigma'}}{(2\beta)_{\sigma'}} \\
&\leq   \sum_{\sigma':B_{k} \xrightarrow[]{} B_{k}} \frac{W_{\sigma'}}{(2\beta)_{\sigma'}} \sum_{\ell=k+1}^{n} W_{B_{k},B_{\ell}} \sum_{\sigma': B_{\ell} \xrightarrow[]{\cancel{B_{k'},B_{k}}} B_j} \frac{W_{\sigma'}}{(2\beta)_{\sigma'}}\\
& =G(B_{k},B_{k}) \sum_{\ell=k+1}^{n} W_{B_{k},B_{\ell}} \sum_{\sigma': B_{\ell} \xrightarrow[]{\cancel{B_{k'},B_{k}}} B_j} \frac{W_{\sigma'}}{(2\beta)_{\sigma'}}
\end{aligned}\end{equation*}
and again \(G(B_{k},B_{k})\) depends only on the diagonal variable at \(B_k\), while the remaining factor is measurable away from \(B_k\). By Lemma~\ref{lem-gamma-dist-Gii}, it is independent of that factor, so
\begin{equation*}\begin{aligned}
&\mathbb{E}_{\widetilde{\Lambda}_{n}^{(k)}}\left( \left(\sum_{\sigma':B_{k} \xrightarrow[]{\cancel{B_{k'}}} B_{k}} \frac{W_{\sigma'}}{(2\beta)_{\sigma'}} \sum_{\ell=k+1}^{n} W_{B_{k},B_{\ell}} \sum_{\sigma': B_{\ell} \xrightarrow[]{\cancel{B_{k'},B_{k}}} B_j} \frac{W_{\sigma'}}{(2\beta)_{\sigma'}} \right)^{s} \right) \\
& \leq c_s^{s} \sum_{\ell=k+1}^{n} W_{B_{k},B_{\ell}}^{s} \mathbb{E}_{\widetilde{\Lambda}_{n}^{(k)}}\left(  \left(  \sum_{\sigma': B_{\ell} \xrightarrow[]{\cancel{B_{k'},B_{k}}} B_j} \frac{W_{\sigma'}}{(2\beta)_{\sigma'}}\right)^{s} \right)\\
& \leq c_s^{s} \sum_{\ell=k+1}^{n} W_{B_{k},B_{\ell}}^{s} \mathbb{E}_{\widetilde{\Lambda}_{n}^{(k)}}\left(  \left(  \sum_{\sigma': B_{\ell} \xrightarrow[]{} B_j} \frac{W_{\sigma'}}{(2\beta)_{\sigma'}}\right)^{s} \right)
\end{aligned}\end{equation*}
After dropping the avoidance constraint, the remaining factor is exactly
\begin{equation}\begin{aligned}
\label{eq-Bl-deltan-Geu-Gij}
\mathbb{E}_{\widetilde{\Lambda}_{n}^{(k)}}\left( \left(\sum_{\sigma':B_{\ell} \xrightarrow[]{} B_j} \frac{W_{\sigma'}}{(2 \beta)_{\sigma'}}\right)^{s}
\right) & = \mathbb{E}_{\widetilde{\Lambda}_{n}^{(k)}}\left(G(B_j,B_{\ell})^{s}\right)\\
& = \mathbb{E}_{\widetilde{\Lambda}_{n}^{(k)}}(G(B_j,B_j)^{s}) \, \mathbb{E}_{\widetilde{\Lambda}_{n}^{(k)}}\left(e^{s u^{[B_j]}_{B_{\ell}}}\right)\\
& = \begin{cases} \mathbb{E}_{\widetilde{\Lambda}_{n}^{(k)}}(G(B_j,B_j)^{s}) \, \mathbb{E}_{\widetilde{\Lambda}_{n}^{(\ell)}}\left(e^{s u^{[B_j]}_{B_{\ell}}}\right), & \ell\le j, \\ \mathbb{E}_{\widetilde{\Lambda}_{n}^{(k)}}(G(B_j,B_j)^{s}) \, \mathbb{E}_{\widetilde{\Lambda}_{n}^{(j)}}\left(e^{s u^{[B_j]}_{B_{\ell}}}\right), & \ell >j,
\end{cases}
\end{aligned}\end{equation}
by Corollary~\ref{coro-effective-model-u}.

To summarize, our discussion of the first sum gives the following upper bound:
\begin{equation}\begin{aligned}
\label{eq-3A-Gij}
&\mathbb{E}_{\widetilde{\Lambda}_{n}^{(k)}}\left( \left(\sum_{B_{k'} \Rightarrow B_{k} \to  B_j } \frac{W_{\sigma}}{(2\beta)_{\sigma}}\right)^{s}\right) \\
&\leq c_s^{{2 s}} W_{B_{k'},B_{k}}^{s} \sum_{\ell=k+1}^{j} W_{B_k,B_{\ell}}^{s} \mathbb{E}_{\widetilde{\Lambda}_{n}^{(k)}}(G(B_j,B_j)^{s}) \mathbb{E}_{\widetilde{\Lambda}_{n}^{(\ell)}}\left(e^{{s u^{[B_j]}_{B_{\ell}}}}\right) \\
& +c_s^{{2 s}} W_{B_{k'},B_{k}}^{s} \sum_{\ell=j+1}^{n} W_{B_k,B_{\ell}}^{s} \mathbb{E}_{\widetilde{\Lambda}_{n}^{(k)}}(G(B_j,B_j)^{s}) \mathbb{E}_{\widetilde{\Lambda}_{n}^{(j)}}\left(e^{{s u^{[B_j]}_{B_{\ell}}}}\right) 
\end{aligned}\end{equation}

For the second class of paths, we factorize the initial segment up to the last visit to \(B_{k'}\), which contributes the factor \(G(B_{k'},B_{k'})\). Thus
\begin{equation*}\begin{aligned}
\sum_{B_{k'} \Rightarrow B_{>k} \to B_j} \frac{W_{\sigma}}{(2\beta)_{\sigma}} = G(B_{k'},B_{k'}) \sum_{\ell=k+1}^{n} W_{B_{k'},B_{\ell}} \sum_{\sigma':B_{\ell} \xrightarrow[]{\cancel{B_{k'}}} B_j}  \frac{W_{\sigma'}}{(2 \beta)_{\sigma'}}
\end{aligned}\end{equation*}
Under the law \(\mathbb{E}_{\widetilde{\Lambda}_{n}^{(k)}}\), \(G(B_{k'},B_{k'})\) is independent of \(\sum_{\sigma':B_{\ell} \xrightarrow[]{\cancel{B_{k'}}} B_j}  \frac{W_{\sigma'}}{(2 \beta)_{\sigma'}}\)  and it equals in law to \(\frac{1}{2\gamma}\), therefore,
\begin{equation*}\begin{aligned}
&\mathbb{E}_{\widetilde{\Lambda}_{n}^{(k)}}\left(\left(\sum_{B_{k'} \Rightarrow B_{>k} \to B_j} \frac{W_{\sigma}}{(2\beta)_{\sigma}}\right)^{s}\right) \\
& \le \sum_{\ell=k+1}^{n} W_{B_{k'},B_{\ell}}^{s}  \mathbb{E}_{\widetilde{\Lambda}_{n}^{(k)}}\left(G(B_{k'},B_{k'})^{s}\right) \mathbb{E}_{\widetilde{\Lambda}_{n}^{(k)}}\left( \left(\sum_{\sigma':B_{\ell} \xrightarrow[]{\cancel{B_{k'}}} B_j}  \frac{W_{\sigma'}}{(2 \beta)_{\sigma'}}\right)^{s}
\right)
\\ &\leq  \sum_{\ell=k+1}^{n} W_{B_{k'},B_{\ell}}^{s}  \mathbb{E}_{\widetilde{\Lambda}_{n}^{(k)}}\left(G(B_{k'},B_{k'})^{s}\right) \mathbb{E}_{\widetilde{\Lambda}_{n}^{(k)}}\left( \left(\sum_{\sigma':B_{\ell} \xrightarrow[]{} B_j}  \frac{W_{\sigma'}}{(2 \beta)_{\sigma'}}\right)^{s}
\right)\\
& =  \sum_{\ell=k+1}^{n} W_{B_{k'},B_{\ell}}^{s}  c_s^{s}\mathbb{E}_{\widetilde{\Lambda}_{n}^{(k)}}\left( \left(\sum_{\sigma':B_{\ell} \xrightarrow[]{} B_j}  \frac{W_{\sigma'}}{(2 \beta)_{\sigma'}}\right)^{s}
\right)
\end{aligned}\end{equation*}
The last factor was identified in \eqref{eq-Bl-deltan-Geu-Gij}. Therefore the second sum satisfies the following upper bound:
\begin{equation}\begin{aligned}
\label{eq-4A-Gij}
\mathbb{E}_{\widetilde{\Lambda}_{n}^{(k)}}\left(\left(\sum_{B_{k'} \Rightarrow B_{>k} \to B_j} \frac{W_{\sigma}}{(2\beta)_{\sigma}}\right)^{s}\right) & \leq  \sum_{\ell=k+1}^{j} W_{B_{k'},B_{\ell}}^{s}  c_s^{s} \mathbb{E}_{\widetilde{\Lambda}_{n}^{(k)}}(G(B_j,B_j)^{s}) \mathbb{E}_{\widetilde{\Lambda}_{n}^{(\ell)}}\left(e^{{s u^{[B_j]}_{B_{\ell}}}}\right) \\
& + \sum_{\ell=j+1}^{n} W_{B_{k'},B_{\ell}}^{s}  c_s^{s} \mathbb{E}_{\widetilde{\Lambda}_{n}^{(k)}}(G(B_j,B_j)^{s}) \mathbb{E}_{\widetilde{\Lambda}_{n}^{(j)}}\left(e^{{s u^{[B_j]}_{B_{\ell}}}}\right) 
\end{aligned}\end{equation}

Combining Eqs \eqref{eq-A-1-Gij}, \eqref{eq-pathsum-2-parts-Gij}, \eqref{eq-3A-Gij}, \eqref{eq-4A-Gij}, and dividing both sides by \(\mathbb{E}_{\widetilde{\Lambda}_{n}^{(k)}}(G(B_j,B_j)^{s})=c_s^s\), we obtain
\begin{equation}\begin{aligned}
\label{eq-recursion-fmm-Gij}
  & \mathbb{E}_{\widetilde{\Lambda}_{n}^{(k)}}(e^{{s u^{[B_j]}_{B_{k'}}}})  \\
& \leq  c_s^{{2 s}} W_{B_{k'},B_{k}}^{s} \sum_{\ell=k+1}^{n} W_{B_k,B_{\ell}}^{s} \mathbb{E}_{\widetilde{\Lambda}_{n}^{(\ell)}}\left(e^{{s u^{[B_j]}_{B_{\ell}}}}\right)  \\
& + \sum_{\ell=k+1}^{n} W_{B_{k'},B_{\ell}}^{s}  c_s^{s} \mathbb{E}_{\widetilde{\Lambda}_{n}^{(\ell)}}\left(e^{{s u^{[B_j]}_{B_{\ell}}}}\right) \\
& = \left(1+ c_s^{s} W_{B_{k'},B_{k}}^{s}\right) c_s^{s} \left(\sum_{\ell=k+1}^{j} W_{B_{k},B_{\ell}}^{s} \mathbb{E}_{\widetilde{\Lambda}_{n}^{(\ell)}}(e^{{s u^{[B_j]}_{B_{\ell}}}})+ \sum_{\ell=j+1}^{n} W_{B_{k},B_{\ell}}^{s} \mathbb{E}_{\widetilde{\Lambda}_{n}^{(j)}}(e^{{s u^{[B_j]}_{B_{\ell}}}})\right).
\end{aligned}\end{equation}
This recursive inequality is the counterpart of Proposition \ref{prop-1-recursiveBdd-uBk} in the case of \(G(i,j)\).

Equation~\eqref{eq-recursion-fmm-Gij} is the two-point analogue of the one-point recursion. It shows that the fractional moment of \(G(B_{k'},B_j)\) can be propagated by moving the left endpoint to the right, and once it overshoots \(j\), by continuing from the right. At each step the relevant graph is more coarse-grained, so the entropy of the remaining path expansion decreases; see Figure~\ref{fig:tikzfiggij2}.

We now iterate \eqref{eq-recursion-fmm-Gij} on the terms \(G(B_{i_1},B_j)=G(B_j,B_j)e^{u^{[B_j]}_{B_{i_1}}}\) and \(G(B_j,B_{i_2})=G(B_j,B_j)e^{u^{[B_j]}_{B_{i_2}}}\). A convenient way to picture the iteration is with two particles: \(A\), starting at \(k'\), and \(B\), starting at \(j\). Particle \(A\) moves first:
\begin{itemize}
\item If \(A\) jumps to a position \(k_1<j\), we update \(A\) to \(k_1\) and let \(A\) move again.
\item If \(A\) jumps to a position \(k_2>j\), we update \(A\) to \(k_2\) and then let \(B\) move.
\item If \(B\) jumps past \(A\), we update \(B\) and let \(A\) move again; otherwise \(B\) continues moving to the right.
\end{itemize}
The iteration stops when both particles meet at some index \(m\). Summing over all admissible jump sequences gives
\[\sum_{\substack{k=k_0<k_1<\cdots <k_{\ell_{1}}=m \\ j=j_0<j_{1}<\cdots < j_{\ell_{2}}=m\\ \{k_{i}\}_{i=0}^{\ell_{1}}\cap \{j_{i}\}_{i=0}^{\ell_{2}}=\{m\}}}\]
where \(\{k_0, \dots, k_{\ell_1}\}\) are the successive positions of \(A\), and \(\{j_0, \dots, j_{\ell_2}\}\) are the successive positions of \(B\). The only common position is \(m\), where they finally meet.
\begin{figure}
\begin{center}
\begin{tikzpicture}[scale=1.2]
    \foreach \i in {0,1,3,5,7,9} {
        \node[fill=black, circle, inner sep=1pt] (p\i) at (\i,0) {};
    }
    
    \node[below] at (p0) {\(k'\)};
    \node[below] at (p1) {\(k\)};
    \node[below] at (p3) {\(i_1\)};
    \node[below] at (p5) {\(j\)};
    \node[below] at (p7) {\(i_2\)};
    \node[below] at (p9) {\(n\)};

    \node at (2,0) {\(\dots\)};
    \node at (4,0) {\(\dots\)};
    \node at (6,0) {\(\dots\)};
    \node at (8,0) {\(\dots\)};

    \draw[->] (p0) to[out=20, in=160] (p3); 
    \draw[->] (p0) to[out=20, in=160] (p7); 

    \draw (p0) -- (0,1.5); 
    \draw (5,0.05) -- (5,1.5); 
    \draw (0,1.5) -- (5,1.5); 
    \node[above] at (2.5,1.5) {\(G(B_{k'}, B_j)\)}; 

    \draw (5,-0.5) -- (4.9,-1.5); 
    \draw (3,-0.5) -- (3,-1.5); 
    \draw (3,-1.5) -- (4.9,-1.5); 
    \node[below] at (4,-1.5) {\(G(B_{i_1 }, B_j)\)}; 

    \draw (7,-0.5) -- (7,-2); 
    \draw (5,-0.5) -- (5.1,-2); 
    \draw (5.1,-2) -- (7,-2); 
    \node[below] at (6,-2) {\(G(B_j, B_{i_2})\)}; 

\end{tikzpicture}
\caption{\emph{Illustration of the inequality \eqref{eq-recursion-fmm-Gij}, where fractional moment of \(G(B_{k'},B_j) \) is bounded by sum over \(G(B_{i_1},B_j) \) and \(G(B_j,B_{i_2}) \)}.}
\end{center}
\label{fig:tikzfiggij2}
\end{figure}

By successive iterations we therefore obtain a bound summing over all admissible jump sequences, where \(k_0,k_1,\dots,k_{\ell_1}\) are the successive positions of particle \(A\), \(j_0,j_1,\dots,j_{\ell_2}\) are those of particle \(B\), and both sequences stop once the particles meet at the same index \(m\). The resulting estimate is
\begin{equation*}\begin{aligned}
&\mathbb{E}_{\widetilde{\Lambda}_{n}^{(k)}}(e^{{s u^{[B_{j}]}_{B_{k}}}})  \\
& \leq  \left(1+ c_s^{s} W_{B_{k'},B_{k}}^{s}\right) c_s^{s} \left(\sum_{k_1=k+1}^{j} W_{B_{k},B_{k_1}}^{s} \mathbb{E}_{\widetilde{\Lambda}_{n}^{(k_1)}}(e^{{s u_{B_{k_1}}}})+ \sum_{k_1=j+1}^{n} W_{B_{k},B_{k_1}}^{s} \mathbb{E}_{\widetilde{\Lambda}_{n}^{(j)}}(e^{{s u_{B_{k_1}}}})\right) \\
& \vdots \\
& \leq  \sum_{m=j}^{n} \sum_{\substack{k=k_0<k_1<\cdots <k_{\ell_{1}}=m \\ j=j_0<j_{1}<\cdots < j_{\ell_{2}}=m\\ \{k_{i}\}_{i=0}^{\ell_{1}}\cap \{j_{i}\}_{i=0}^{\ell_{2}}=\{m\}}} \prod_{i=0}^{\ell_{1}-1} \left[\left(1+ c_s^{s} W_{B_{k_i'},B_{k_i}}^{s}\right)c_s^{s} W_{B_{k_i},B_{k_{i+1}}}^{s}\right]\prod_{i=0}^{\ell_{2}-1} \left[\left(1+ c_s^{s} W_{B_{j_i'},B_{j_i}}^{s}\right)c_s^{s} W_{B_{j_i},B_{j_{i+1}}}^{s}\right]
\end{aligned}\end{equation*}
Then we upper bound this sum by summing over the \(\{k_0 ,\cdots,k_{\ell_{1}}\}\) and \(\{j_0 ,\cdots,j_{\ell_{2}}\}\) without the condition that the only intersection is \(m\),
\[\begin{aligned}
& \leq  \sum_{m=j}^{n} \sum_{\substack{k=k_0<k_1<\cdots <k_{\ell_{1}}=m \\ j=j_0<j_{1}<\cdots < j_{\ell_{2}}=m}} \prod_{i=0}^{\ell_{1}-1} \left[\left(1+ c_s^{s} W_{B_{k_i'},B_{k_i}}^{s}\right)c_s^{s} W_{B_{k_i},B_{k_{i+1}}}^{s}\right]\prod_{i=0}^{\ell_{2}-1} \left[\left(1+ c_s^{s} W_{B_{j_i'},B_{j_i}}^{s}\right)c_s^{s} W_{B_{j_i},B_{j_{i+1}}}^{s}\right]\\
& \leq \sum_{m=j}^{n} \left(\sum_{{k=k_0<k_1<\cdots <k_{\ell_{1}}=m }} \prod_{i=0}^{\ell_{1}-1} \left[\left(1+ c_s^{s} W_{B_{k_i'},B_{k_i}}^{s}\right)c_s^{s} W_{B_{k_i},B_{k_{i+1}}}^{s}\right]\right) \\
&\times \left(\sum_{ j=j_0<j_{1}<\cdots < j_{\ell_{2}}=m}\prod_{i=0}^{\ell_{2}-1} \left[\left(1+ c_s^{s} W_{B_{j_i'},B_{j_i}}^{s}\right)c_s^{s} W_{B_{j_i},B_{j_{i+1}}}^{s}\right]\right).
\end{aligned}\]
Using \eqref{eq-sum-one-path-1plusAj}, we obtain, in the case \(d<2\),
\[\begin{aligned}
\mathbb{E}_{\widetilde{\Lambda}_{n}^{(k)}}(e^{{s u^{[B_{j}]}_{B_{k}}}})
&\leq \sum_{m=j}^{n} C(\overline{W},d,s)\rho^{-s(m-k)}\, C(\overline{W},d,s)\rho^{-s(m-j)}\\
&= C(\overline{W},d,s)^2\rho^{s(k-j)}\frac{1-\rho^{-2s(n-j+1)}}{1-\rho^{-2s}}.
\end{aligned}\]
Setting \(k=0\), we may take
\[
C''(\overline{W},d,s)=C(\overline{W},d,s)^2\frac{1}{1-\rho^{-2s}}.
\]

When \(d=2\), we instead use \eqref{eq-sum-one-path-1plusAj-d-is-2}. Choosing \(\overline{W}\) small enough so that \eqref{eq-choose-upsilon} holds, we conclude exactly as in the proof of Theorem~\ref{thm-FMM-exp-u}, with \(c(\overline{W},s)\) in place of \(\rho\).
\end{proof}

%
%


\section*{Acknowledgments}
We thank M. Disertori for helpful discussions on the first draft of this manuscript. We also acknowledge the support of the Institut de recherche en mathématiques, interactions \& applications: IRMIA\(++\).

\end{document}